\documentclass{amsart}
\usepackage{amsmath, amsthm, amssymb}
\usepackage{verbatim}

\newcommand{\vepsilon}{\varepsilon}
\newcommand{\vphi}{\varphi}

\newcommand{\cA}{\mathcal{A}}

\newcommand{\cM}{\mathcal{M}}
\newcommand{\cH}{\mathcal{H}}

\newcommand{\bC}{\mathbb{C}}

\newcommand{\supp}{\mbox{supp }}

\newtheorem{thm}{Theorem}
\newtheorem{prop}[thm]{Proposition}
\newtheorem{lem}[thm]{Lemma}
\newtheorem{cor}[thm]{Corollary}

\theoremstyle{definition}
\newtheorem{defn}[thm]{Definition}
\newtheorem{remark}[thm]{Remark}

\newtheorem{fact}[thm]{Claim}

\numberwithin{thm}{section}
\numberwithin{equation}{section}

\renewcommand{\[}{\begin{equation}}
\renewcommand{\]}{\end{equation}}

\newcommand{\wed}{\wedge}

\begin{document}

\title[H\"older continuous solutions of Monge-Amp\`ere  equations]{H\"older continuous solutions of the  Monge-Amp\`ere  equation on compact Hermitian manifolds} 
\author[S. Ko\l odziej and N.-C. Nguyen]{S\l awomir Ko\l odziej and Ngoc Cuong Nguyen} 

\address{Faculty of Mathematics and Computer Science, Jagiellonian University 30-348 Krak\'ow, \L ojasiewicza 6, Poland}
\email{Slawomir.Kolodziej@im.uj.edu.pl}

\address{Department of Mathematics and Center for Geometry and its Applications, Pohang University of Science and Technology, 37673, The Republic of Korea}
\email{cuongnn@postech.ac.kr}

\subjclass[2010]{53C55, 35J96, 32U40}

\keywords{Weak solutions, H\"older continuous, Monge-Amp\`ere, Compact Hermitian manifold}

\date{}
\maketitle

\begin{center}
{\em Dedicated to Jean-Pierre Demailly  on the occasion of his 60th birthday}
\end{center}

\bigskip

\begin{abstract}
We show that a positive Borel measure of positive finite total mass, on compact Hermitian manifolds, admits a H\"older continuous quasi-plurisubharmonic solution to the Monge-Amp\`ere equation if and only if it is dominated locally by Monge-Amp\`ere measures of H\"older continuous plurisubharmonic functions.
\end{abstract}

\section{Introduction}

The analogue of the Calabi-Yau theorem  on compact Hermitian manifolds was proven in 2010  by Tosatti and Weinkove   \cite{TW10b}. 
Continuous weak solutions for the right hand side in $L^p , \ p>1$ were obtained later by the authors \cite{KN15}. Here we continue to study
weak solutions for more general measures.

Consider   a compact Hermitian manifold $(X,\omega)$ of dimension $n$, and  a positive Radon measure $\mu$ with finite total mass  on $X$.
An upper semicontinuous  function $u$ on $X$ is called $\omega-$psh if $dd^c u +\omega \geq 0 $ (as currents). Then we write $u \in PSH(\omega) .$
Our objective is to show that if the complex Monge-Amp\`ere equation has H\"older continuous solutions for $\mu$ restricted to local charts then
it has H\"older continuous solutions globally on $X$. To be precise we introduce first  the following definition.

\begin{defn}
\label{defn:holder-sub}  We say that  $\mu$ admits a global H\"older continuous {\em subsolution}  if there exists a H\"older continuous $\omega-$psh function $u$ and $C_0>0$ such that
\[\label{eq:holder-sub}
\mu \leq C_0 (\omega + dd^c u)^n \quad \mbox{ on } X.
\]
Let us denote by $\cM$  the set of all such measures.
\end{defn}

To verify the defining condition it is enough to look at $\mu $  locally.

\begin{lem}\label{lem:global-local-sub-solution} A measure $\mu$ belongs to $\cM$ if and only if   for every $x\in X$, there exists a neighborhood $D$ of $x$ and a H\"older continuous psh function $v$ 
on $D$ such that $\mu_{|_D} \leq (dd^c v)^n$. 
\end{lem}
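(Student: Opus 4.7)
\emph{Necessity.} Given the global H\"older $\omega$-psh $u$ with $\mu \le C_0(\omega+dd^c u)^n$, for each $x\in X$ I would take a coordinate ball $D\ni x$ small enough that $\omega \le A\, dd^c|z|^2$ in local coordinates, for some $A>0$. Setting $v := C_0^{1/n}(u + A|z|^2)$, the function $v$ is H\"older on $D$, and $dd^c v = C_0^{1/n}(dd^c u + A\, dd^c|z|^2) \ge C_0^{1/n}(A\, dd^c|z|^2 - \omega) \ge 0$, so $v$ is psh. Since $0 \le \omega + dd^c u \le dd^c(u + A|z|^2)$ as positive $(1,1)$-forms, Bedford--Taylor monotonicity gives $(dd^c v)^n \ge C_0(\omega + dd^c u)^n \ge \mu$ on $D$.

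\emph{Sufficiency.} The harder direction requires gluing the local H\"older psh subsolutions into a global H\"older $\omega$-psh subsolution. By compactness, extract a finite cover $\{D_i = B(z_i,R_i)\}_{i=1}^N$ of $X$ by coordinate balls with H\"older psh $v_i$ on $D_i$ satisfying $\mu|_{D_i} \le (dd^c v_i)^n$, arranged (by adding more balls if needed) so that every point of $X$ lies in the annular region $\{R_i/2 < |z-z_i| < R_i\}$ of at least one $D_i$. On each $D_i$ replace $v_i$ with $\tilde v_i := v_i + A|z-z_i|^2$, still H\"older psh, but now growing by a definite amount towards $\partial D_i$. Taking the reference $\rho \equiv 0$ (smooth $\omega$-psh since $\omega \ge 0$) and choosing $c_i$ large enough that $\tilde v_i - c_i < 0$ on an open collar of $\partial D_i$ inside $D_i$, I would define
\[
U_i := \begin{cases} \max(\tilde v_i - c_i,\,0) & \text{on } D_i, \\ 0 & \text{on } X\setminus D_i. \end{cases}
\]
Each $U_i$ is then globally H\"older $\omega$-psh (the two pieces agree on the collar), and on the open set $E_i := \{\tilde v_i > c_i\}\subset D_i$ Bedford--Taylor monotonicity yields $(\omega + dd^c U_i)^n = (\omega + dd^c \tilde v_i)^n \ge (dd^c v_i)^n \ge \mu$.

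Finally, set $u := M_\eta(U_1 + a_1,\ldots,U_N + a_N)$, Demailly's regularized maximum, with shifts $a_i$ and parameter $\eta>0$ chosen inductively so that at each $x\in X$ there is some $i(x)$ with $x\in E_{i(x)}$ and $U_{i(x)} + a_{i(x)} > U_j + a_j + 2\eta$ for all $j\ne i(x)$ on a neighborhood of $x$. In such a neighborhood $u$ coincides with $U_{i(x)} + a_{i(x)}$, so $(\omega + dd^c u)^n = (\omega + dd^c U_{i(x)})^n \ge \mu$ there, giving $\mu \le C_0(\omega + dd^c u)^n$ on all of $X$; H\"older continuity of $u$ follows from that of each $U_i$ together with the smoothness of $M_\eta$.

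\emph{Main obstacle.} The main difficulty is the combinatorics of the gluing: the growth term $A|z-z_i|^2$ forces each ``subsolution region'' $E_i$ to be an annulus around $z_i$ rather than a neighborhood of the center, so the cover must be arranged so that every point of $X$ lies in some annular $E_j$, and the constants $a_i$ must be inductively tuned so that exactly one $U_i + a_i$ $\eta$-dominates in the regularized max at each point. Coordinating these geometric and analytic constraints, while preserving the H\"older exponent throughout, is the heart of the argument.
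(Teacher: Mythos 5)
Your necessity argument is fine and is essentially the paper's (the paper dismisses it as obvious). The sufficiency direction, however, contains a fatal flaw at the very first step of the gluing. You require $c_i$ to be chosen so that $\tilde v_i - c_i < 0$ on a collar of $\partial D_i$, in order to extend $U_i=\max(\tilde v_i-c_i,0)$ by $0$ across $\partial D_i$. But $\tilde v_i=v_i+A|z-z_i|^2$ is plurisubharmonic on $D_i$, so by the maximum principle $\sup$ of $\tilde v_i$ over any ball $B(z_i,\rho)$ is attained on the sphere $|z-z_i|=\rho$; since $\tilde v_i<c_i$ on a sphere inside the collar, it follows that $\tilde v_i\le c_i$ on all of $D_i$. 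Hence $E_i=\{\tilde v_i>c_i\}$ is empty (not an annulus, as your ``main obstacle'' paragraph suggests) and $U_i\equiv 0$, so no domination of $\mu$ survives anywhere. This is not a matter of tuning constants: for any psh $w$ on $D_i$, the set $\{w>c\}$, if nonempty, must reach $\partial D_i$, which is exactly incompatible with matching the extension to the constant $0$ near $\partial D_i$. The way out --- and this is what the paper's appeal to the strict positivity of $\omega$ and to \cite{kol05} encodes --- is to glue a small multiple $\epsilon_i(v_i-M_i)$ not against a constant but against a function such as $-\delta_i\chi_i$, where $\chi_i$ is a cutoff equal to $1$ on an inner ball and $0$ near $\partial D_i$: this function is $C\omega$-psh (here strict positivity of $\omega$ is used), is much more negative in the middle than at the boundary, and so the rescaled local subsolution can be kept active on the inner ball while lying below $-\delta_i\chi_i$ near $\partial D_i$. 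One only retains $(\omega+dd^cU_i)^n\ge \epsilon_i^n\,\mu$ on the inner ball, which is exactly why the constant $C_0$ is allowed in the definition of $\cM$.

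Your final gluing step is also structurally impossible as stated. The sets $A_i=\{U_i+a_i>U_j+a_j+2\eta\ \forall j\ne i\}$ are open and pairwise disjoint; if, as you require, every $x\in X$ lies in some $A_{i(x)}$, then on the connected compact $X$ exactly one $A_i$ is nonempty and equals $X$, forcing $X\subset D_i$, which is absurd for a coordinate ball. So no choice of shifts $a_i$ and parameter $\eta$ can make a single branch dominate near every point, and on the transition regions the regularized maximum gives no lower bound on the Monge--Amp\`ere measure. The paper's (implicit) construction avoids maxima across charts altogether: after extending each local subsolution to a global H\"older continuous $C\omega$-psh function $U_i$ as above, one takes the normalized sum $u=\frac{1}{NC}\sum_i U_i$ and uses the superadditivity of mixed Monge--Amp\`ere measures, $\bigl(\omega+dd^cu\bigr)^n\ge (NC)^{-n}\bigl(\omega+dd^cU_i\bigr)^n$ for each $i$, so that the local dominations on the inner balls (which cover $X$) combine into the single global inequality $\mu\le C_0(\omega+dd^cu)^n$.
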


\begin{proof}  The necessary condition is obvious, so we   prove the sufficient condition. Using the strict positivity of $\omega$ we can extend a H\"older continuous psh function $v$  defined in a local coordinate chart to the whole space $X$ so that the extension is  a H\"older continuous $C \omega-$psh function for some large $C>0$. Taking a finite cover by coordinate charts and using the partition of unity one easily constructs  a global $\omega-$psh  function $u$ satisfying \eqref{eq:holder-sub} (see \cite{kol05} for details of such a construction).
\end{proof}

Our main result can be viewed as a generalization of  Demailly et al.  \cite[Proposition~4.3]{demailly-et-al14} from the K\"ahler to the Hermitian setting.

\begin{thm}\label{thm:holder}  Assume that $0< \mu(X) < +\infty$. There exists a H\"older continuous $\omega$-psh $\vphi$ and a constant $c>0$ solving $$(\omega + dd^c \vphi)^n  = c\; \mu$$
if and only if  $\mu$ belongs to $\cM$. 
\end{thm}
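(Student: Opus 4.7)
The ``only if'' direction is immediate: a H\"older continuous solution $\vphi$ with $(\omega+dd^c\vphi)^n=c\mu$ is itself a global H\"older continuous subsolution for $\mu$ with $C_0=1/c$, so $\mu\in\cM$.

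For the converse, fix a H\"older continuous $\omega$-psh $u$ with $\mu\leq C_0(\omega+dd^c u)^n$. My plan is to follow the classical approximation-and-regularization scheme adapted to the Hermitian setting. Writing $\mu=g(\omega+dd^c u)^n$ with $g\in L^\infty$, I approximate by a sequence of smooth positive measures $\mu_j$ with $\mu_j\to\mu$ weakly and $\mu_j(X)=\mu(X)$, and invoke Tosatti-Weinkove \cite{TW10b} to produce smooth $\omega$-psh $\vphi_j$ and constants $c_j>0$ with $(\omega+dd^c\vphi_j)^n=c_j\mu_j$ and $\sup_X\vphi_j=0$. The $L^\infty$ and stability machinery of \cite{KN15} provides uniform bounds $\|\vphi_j\|_\infty\leq M$ with $c_j$ bounded above and away from zero, and a comparison argument against the subsolution yields a uniform lower bound of the form $\vphi_j\geq u-C'$.

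The core of the proof is a uniform H\"older estimate on $\vphi_j$. I would regularize each $\vphi_j$ at scale $\vepsilon>0$ by a Demailly-Kiselman type sup-convolution patched across coordinate charts by a partition of unity, producing a decreasing family of $A\omega$-psh approximants $\vphi_{j,\vepsilon}\searrow\vphi_j$ for some fixed $A\geq 1$. The key quantitative estimate is
\[
\|\vphi_{j,\vepsilon}-\vphi_j\|_{L^\infty(X)}\leq C\vepsilon^{\alpha'},
\]
uniform in $j$, for some $\alpha'>0$ depending only on the H\"older exponent of $u$. To prove it I would combine an $L^\infty$-stability inequality for the Hermitian Monge-Amp\`ere equation (in the spirit of \cite{KN15}) with the subsolution hypothesis $\mu\leq C_0(\omega+dd^c u)^n$, reducing the problem to bounding $\int_X(\vphi_{j,\vepsilon}-\vphi_j)(\omega+dd^c u)^n$. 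Iterated integration by parts against $\omega+dd^c u$, while carefully tracking the torsion terms $d\omega$ and $dd^c\omega$, combined with the H\"older continuity of $u$, produces the required power of $\vepsilon$. A standard elementary argument then converts this into a uniform H\"older modulus of continuity for $\vphi_j$ itself.

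The passage to the limit is then routine: Arzel\`a-Ascoli extracts a uniformly convergent subsequence $\vphi_{j_k}\to\vphi$ with the same H\"older exponent, and weak continuity of the Monge-Amp\`ere operator along uniformly convergent bounded $\omega$-psh functions (cf.\ \cite{KN15}) yields $(\omega+dd^c\vphi)^n=c\mu$ with $c=\lim c_{j_k}>0$. The main obstacle is the uniform H\"older estimate: since $\vphi_{j,\vepsilon}$ is only $A\omega$-psh rather than $\omega$-psh, and since torsion contributions from $dd^c\omega\neq 0$ infiltrate every comparison principle and every integration by parts, the delicate point is to verify that these extra terms can be absorbed without damaging the H\"older exponent; this is precisely what distinguishes the argument from its K\"ahler analogue in \cite[Proposition~4.3]{demailly-et-al14}.
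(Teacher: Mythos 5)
Your ``only if'' direction is fine, and your overall skeleton (approximate, solve, uniform estimates, pass to the limit) is the right shape, but the two steps you treat as routine are precisely where the proof lives, and as written they do not work. First, the approximation: you replace $\mu$ by generic smooth measures $\mu_j\to\mu$ in order to invoke Tosatti--Weinkove \cite{TW10b}. All the uniform estimates you subsequently need --- the two-sided bounds on $c_j$, the bound $\|\vphi_j\|_\infty\leq M$, and any stability with exponents depending only on the H\"older exponent of $u$ --- come from the measures $\mu_j$ being dominated, \emph{uniformly in $j$}, by Monge--Amp\`ere measures of H\"older continuous functions with uniform exponent and norm: this is what yields the uniform volume--capacity inequality of Lemma~\ref{lem:DNS} (via \cite{DNS10}) and hence Corollary~\ref{cor:halpha} and the stability estimate. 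A generic mollification of $\mu$ carries no such structure, so none of these bounds is justified for your $\mu_j$. The paper avoids this by writing $\mu=C_0h\,\omega_u^n$ and taking $\mu_j:=C_0h\,\omega_{u_j}^n$ with $u_j\to u$ in $C^{0,\alpha'}$ (Lemma~\ref{lem:smooth-approximation}); these $\mu_j$ are not smooth, and the solutions $\vphi_j$ come from \cite{KN15} rather than \cite{TW10b}, but the subsolution structure, hence the capacity estimate, is preserved with uniform constants. If you insist on your smoothing, you must prove that your $\mu_j$ admit H\"older subsolutions with uniform data, which you have not done.

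Second, the uniform H\"older estimate itself. Your regularization is only ``$A\omega$-psh for some fixed $A\geq1$'', i.e.\ the loss of positivity is of order one, and such an approximant cannot be fed into an $L^1$--$L^\infty$ stability inequality to produce a bound $C\vepsilon^{\alpha'}$: the natural $\omega$-psh competitor $\vphi_{j,\vepsilon}/A$ differs from $\vphi_{j,\vepsilon}$ by a term of size $(1-1/A)\|\vphi_j\|_\infty$, which is of order one, not $O(\vepsilon^{\alpha'})$. The whole point of Lemma~\ref{kis} (Demailly's regularization through the Chern connection together with the Kiselman--Legendre transform at level $b=O(\delta^\alpha)$) is to make the loss of positivity $O(\delta^{\alpha})$, after which $(1-\delta^\alpha)\Phi_{\delta,b}$ is genuinely $\omega$-psh and Proposition~\ref{prop:l1-stability} applies; note also that the Hermitian proof of that stability estimate is itself nontrivial, resting on the capacity machinery of \cite{KN15}, \cite{KN3}. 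Likewise, the $L^1$ input $\|\rho_\delta\vphi-\vphi\|_{L^1(d\mu)}\leq C\delta^{\alpha_1}$ is a separate statement (Lemma~\ref{lem:l1-bound}, essentially \cite{demailly-et-al14}), and this --- not an unspecified ``iterated integration by parts tracking torsion'' --- is where the H\"older continuity of $u$ enters; moreover, gluing local sup-convolutions by a partition of unity does not produce quasi-psh functions at all, since the cutoffs destroy plurisubharmonicity. Finally, be aware that the paper never proves uniform H\"older bounds for the approximants: it first shows $\{\vphi_j\}$ is Cauchy in $C^0(X)$ (Lemma~\ref{lem:capacity-convergence} together with the argument of \cite{KN15}) to obtain a continuous solution, and only then proves H\"older continuity of the limit $\vphi$ directly. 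Your alternative plan is not absurd in principle, but the two gaps above are exactly the missing substance, so the proposal is a programme rather than a proof.
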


Thanks to this theorem the important class of measures having $L^p$-density, for $p>1,$  admits  H\"older continuous solutions. 
This result was proven in \cite[Theorem~B]{KN2} under the extra assumption that the right hand side is strictly positive.

\begin{cor} \label{cor:lp} Let $f$ be a non-negative function in $ L^p(\omega^n)$ for $p>1$. Assume that  $\int_X f \omega^n >0$. Then there exists a H\"older continuous $\vphi \in PSH(\omega)$ and a constant $c>0$ such that
$$(\omega + dd^c \vphi)^n = c f\omega^n.$$
\end{cor}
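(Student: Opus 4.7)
The strategy is to reduce to Theorem~\ref{thm:holder} by verifying that the measure $\mu := f\,\omega^n$ belongs to $\cM$. The condition $\int_X f\,\omega^n > 0$ ensures $0 < \mu(X) < +\infty$, so only membership in $\cM$ requires work; by Lemma~\ref{lem:global-local-sub-solution} this is a purely local matter.

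Fix $x \in X$ and choose a small coordinate ball $D$ centered at $x$ on which the Hermitian form $\omega$ is comparable to the Euclidean K\"ahler form $\beta := dd^c |z|^2$. Shrinking $D$ if necessary, we may assume $\omega^n \leq A\,\beta^n$ on $\bar D$ for some constant $A > 0$, so that
$$f\,\omega^n \ \leq\ A\,f\,\beta^n \quad \text{on } D.$$
It therefore suffices to produce a H\"older continuous psh function $v$ on $D$ with $(dd^c v)^n \geq A\,f\,\beta^n$; such a $v$ satisfies $\mu_{|_D} \leq (dd^c v)^n$ and verifies the local hypothesis of Lemma~\ref{lem:global-local-sub-solution}, giving $\mu \in \cM$.

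To produce $v$, consider the classical Dirichlet problem
$$(dd^c v)^n = A\, f\,\beta^n \ \text{in } D, \qquad v = 0 \ \text{on } \partial D.$$
Since $\omega^n$ and $\beta^n$ are uniformly equivalent on $\bar D$, the right-hand side has density in $L^p(\beta^n)$ with $p > 1$. The local H\"older regularity theory for the Monge-Amp\`ere Dirichlet problem with $L^p$ data, due to Ko\l odziej and Guedj-Ko\l odziej-Zeriahi, then supplies a psh solution $v \in C^{0,\alpha}(\bar D)$ for some $\alpha \in (0,1)$. Applying Theorem~\ref{thm:holder} to $\mu \in \cM$ yields the desired global H\"older continuous $\omega$-psh function $\vphi$ and constant $c > 0$.

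The only substantive input here is the local H\"older regularity for the Dirichlet problem with $L^p$ right-hand side, which is well established in the literature; the rest of the argument simply unpacks the Hermitian framework already encoded in Lemma~\ref{lem:global-local-sub-solution} and Theorem~\ref{thm:holder}. No genuine difficulty is expected beyond quoting this existing local theory, and notably this approach does not require $f$ to be strictly positive, which is the improvement over \cite[Theorem~B]{KN2}.
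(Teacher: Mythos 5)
Your proposal is correct and follows essentially the same route as the paper: verify $f\omega^n\in\cM$ locally by solving a Dirichlet problem with $L^p$ right-hand side in a coordinate ball and invoking the H\"older regularity of \cite{GKZ08} (or \cite{Cha15a}), then conclude via Lemma~\ref{lem:global-local-sub-solution} and Theorem~\ref{thm:holder}. The only cosmetic differences are that the paper first produces the continuous solution from \cite[Theorem~0.1]{KN15} and uses a cut-off function $\chi$ in the local Dirichlet problem, neither of which is essential for the statement as given.
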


\begin{proof} By \cite[Theorem~0.1]{KN15} there exists $\vphi \in PSH(\omega) \cap C^0(X)$ and a constant $c>0$ satisfying 
\[\notag
	(\omega+ dd^c\vphi)^n = c f\omega^n.
\]
Consider  a local coordinate chart $B \subset \subset X$  parametrized by
 the unit ball in $\bC^n$. Let $\chi$ be a smooth cut-off function such that
\[\notag
	0\leq \chi \leq 1, \quad \chi = 1 \mbox{ on } B(0, 1/2), \quad
	\supp \chi \subset\subset B.
\]
Find $w \in PSH(B)$ the solution of the Dirichlet problem for the Monge-Amp\`ere equation:
\[\notag
	(dd^c w)^n = c \chi f \omega^n, \quad w_{|_{\partial B}} =0.
\]
By the main result of \cite{GKZ08} (see also \cite{Cha15a}) we get that $w \in C^{0,\alpha}(\bar{B})$ for some $\alpha$ positive depending only on $n,p.$ Therefore, on $B(0,1/2)$ the right hand  side $cf \omega^n$ is dominated by $(dd^cw)^n$. We conclude from Lemma~\ref{lem:global-local-sub-solution} and Theorem~\ref{thm:holder} that  $\vphi$ is H\"older continuous.
\end{proof}

\begin{remark} Using the recent result from \cite{cuong17} instead of \cite{GKZ08} we also can show that if $\mu \in \cM$ and $0\leq f \in L^p(d\mu)$ for $p>1$, then $f d\mu \in \cM$.  In other words, $\cM$ satisfies the $L^p-$property (see \cite{demailly-et-al14}) and the above corollary is a special case.
\end{remark}

Another consequence of the main result is the  convexity of the range of Monge-Amp\`ere operator acting on H\"older continuous functions.

\begin{cor} \label{cor:convexity} The set 
$$\cA := \left\{  c \cdot (\omega + dd^c \vphi)^n: \begin{aligned}  \vphi \in PSH(\omega),\; \vphi \mbox{ is H\"older continuous, } c>0.
\end{aligned}
\right\}
$$
is  convex.
\end{cor}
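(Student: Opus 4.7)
My approach is to combine Theorem~\ref{thm:holder} with the local criterion in Lemma~\ref{lem:global-local-sub-solution}, reducing convexity of $\cA$ to the construction of Hölder continuous psh subsolutions for convex combinations in local coordinate charts.

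Given $\mu_1,\mu_2 \in \cA$ and $t \in (0,1)$ (the endpoints being trivial), I would write $\mu_i = c_i(\omega+dd^c\vphi_i)^n$ with $\vphi_i$ Hölder continuous $\omega$-psh and $c_i > 0$. The convex combination $\mu := t\mu_1 + (1-t)\mu_2$ has finite positive total mass (finiteness from the boundedness of the $\vphi_i$, positivity from nontriviality of both $\mu_i$), so by Theorem~\ref{thm:holder} it suffices to show $\mu \in \cM$. By Lemma~\ref{lem:global-local-sub-solution} this amounts to producing, on each small ball $D \subset X$, a Hölder continuous psh function $v$ on $D$ with $\mu \leq (dd^c v)^n$ on $D$.

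The candidate is $v := w_1 + w_2$, with $w_i$ a Hölder continuous psh potential on $D$ satisfying $(dd^c w_i)^n \geq \lambda_i\, \mu_i$ on $D$, where $\lambda_1 = t$, $\lambda_2 = 1-t$. Concretely, fixing a local coordinate $z$ on $D$ and a constant $A > 0$ with $A\, dd^c |z|^2 \geq \omega$ on $D$, one may take $w_i := (\lambda_i c_i)^{1/n}(A|z|^2 + \vphi_i)$; since $dd^c(A|z|^2+\vphi_i) \geq \omega + dd^c\vphi_i$ as positive $(1,1)$-currents, this inequality passes to $n$-th powers via the telescoping expansion of $T_2^n - T_1^n$ in wedge products, yielding the required bound on $(dd^c w_i)^n$.

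The central step is the superadditivity inequality
\[\notag
	(dd^c(w_1+w_2))^n = \sum_{k=0}^n \binom{n}{k}\, (dd^c w_1)^k \wedge (dd^c w_2)^{n-k} \geq (dd^c w_1)^n + (dd^c w_2)^n,
\]
valid for locally bounded psh $w_1,w_2$ since every mixed Bedford--Taylor current on the right is a positive measure. Combining with the previous step yields $\mu \leq (dd^c(w_1+w_2))^n$ on $D$, and $w_1+w_2$ is Hölder continuous psh, so Lemma~\ref{lem:global-local-sub-solution} gives $\mu \in \cM$, and Theorem~\ref{thm:holder} concludes. No serious obstacle remains beyond recognizing this superadditivity as the mechanism that produces local subsolutions for sums of measures; everything else is a formal reduction via the main theorem and its local counterpart.
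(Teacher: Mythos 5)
Your proof is correct, and its engine is the same one the paper uses: the superadditivity $(T_1+T_2)^n\ge T_1^n+T_2^n$ for positive $(1,1)$-currents with bounded potentials, followed by an appeal to Theorem~\ref{thm:holder}. The difference is purely in the packaging. The paper works globally: since $\omega+dd^c\bigl(t\vphi_1+(1-t)\vphi_2\bigr)=t\,\omega_{\vphi_1}+(1-t)\,\omega_{\vphi_2}$, the convex combination of potentials is itself a global H\"older continuous $\omega$-psh subsolution for $t\mu_1+(1-t)\mu_2$ (the paper states the midpoint case, which suffices because $\cA$ is a cone), so membership in $\cM$ is immediate from Definition~\ref{defn:holder-sub} with no localization. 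You instead route the argument through Lemma~\ref{lem:global-local-sub-solution}, building local psh potentials $(\lambda_i c_i)^{1/n}(A|z|^2+\vphi_i)$ and adding them; this works, but it costs you two extra (harmless) steps: the monotonicity $(dd^c(A|z|^2+\vphi_i))^n\ge(\omega+dd^c\vphi_i)^n$ — fine here since the difference $A\,dd^c|z|^2-\omega$ is a smooth nonnegative form, so the multinomial expansion applies even though $\omega$ is not closed — and the re-globalization hidden inside Lemma~\ref{lem:global-local-sub-solution}. Both your argument and the paper's implicitly use that $0<\mu(X)<+\infty$ before invoking Theorem~\ref{thm:holder} (finiteness from Chern--Levine--Nirenberg bounds for bounded potentials, positivity of the total Monge--Amp\`ere mass of bounded $\omega$-psh functions), which you at least flag explicitly. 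In short: same mechanism, but the global convex combination of the $\vphi_i$ makes the local detour unnecessary.
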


\begin{proof} For brevity we use the notation $\omega_{\vphi}^n := (\omega + dd^c \vphi)^n$.  Let $c_1\omega_{\vphi_1}^n, c_2 \omega_{\vphi_2}^n \in \cA.$ It is easy to see that
\[\notag
	\mu:= \frac{1}{2} (c_1 \omega_{\vphi_1}^n + c_2\omega_{\varphi_2}^n) \leq  2^{n-1} (c_1+ c_2) \left(\omega + dd^c \frac{\vphi_1 + \vphi_2}{2}\right)^n.
\]
Apply Theorem~\ref{thm:holder} to get that 
$
	\omega_\phi^n = c \mu
$
for some H\"older continuous $\omega$-psh $\phi$ and some constant $c>0.$ Therefore, $\mu \in \cA.$
\end{proof}

\bigskip

\bigskip
{\bf Dedication. }  It is our privilege to dedicate this paper to Jean-Pierre Demailly, a great mathematician and a champion for math education.

\bigskip

{\bf Acknowledgement.} The first author was partially supported by NCN grant  
2013/ 08/A/ST1/00312. The second author was supported by  the NRF Grant 2011-0030044 (SRC-GAIA) of The Republic of Korea. He also would like to thank Kang-Tae Kim for encouragement and support.

\section{Preliminaries}

Let us recall the definition of the Bedford-Taylor capacity. For a Borel set  $E\subset X$ put
\[
	cap_\omega(E) := \sup \left\{\int_E \omega_v^n : v\in PSH(\omega), 0\leq v \leq 1\right\}.
\] 
By  \cite[p. 52]{kol05},  this capacity is comparable with the local Beford-Taylor capacity $cap_\omega'(E)$. 
Combining this fact  with the work of Dinh-Nguyen-Sibony \cite{DNS10} we get the following result.

\begin{lem} \label{lem:DNS}  Let $\mu \in \cM$. Then,  for every compact set $K\subset X$,
\[\label{eq:vol-cap}
	\mu(K) \leq C \exp \left( \frac{-\alpha_1}{[cap_\omega(K)]^\frac{1}{n}} \right),
\]
where $C, \alpha_1>0$ depend only on $X$ and the  H\"older exponent of the global H\"older subsolution.\end{lem}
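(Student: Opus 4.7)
The plan is to patch together local volume-capacity estimates from Dinh-Nguyen-Sibony~\cite{DNS10} using the comparability of $cap_\omega$ with the local Bedford-Taylor capacity $cap_\omega'$ recalled from \cite[p.~52]{kol05}, combined with a finite cover of $X$ by coordinate charts.

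First, I would invoke the easy direction of Lemma~\ref{lem:global-local-sub-solution}: since $\mu \in \cM$, one can find a finite cover $\{B_j\}_{j=1}^N$ of $X$ by coordinate balls, each relatively compact in a slightly larger chart $B_j' \subset\subset X$, together with H\"older continuous psh functions $v_j$ on $B_j'$ satisfying $\mu_{|_{B_j'}} \leq (dd^c v_j)^n$, with H\"older exponent controlled by that of the global subsolution.

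Next, I would apply the main result of \cite{DNS10}: for each $j$ there exist constants $C_j, \beta_j > 0$ depending only on the H\"older exponent of $v_j$ and on the pair $B_j \subset\subset B_j'$ such that for every Borel set $E \subset B_j$,
\[\notag
	(dd^c v_j)^n(E) \leq C_j \exp\!\left(\frac{-\beta_j}{cap_\omega'(E)^{1/n}}\right).
\]
Combined with the subsolution inequality, this yields the exponential decay of $\mu_{|_{B_j}}$ in terms of the local capacity $cap_\omega'$.

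To globalize, the comparability $cap_\omega'(E) \leq A \cdot cap_\omega(E)$ on charts, together with the monotonicity $cap_\omega(K \cap B_j) \leq cap_\omega(K)$, converts each local estimate into one in terms of $cap_\omega(K)$. Subadditivity of $\mu$ then gives
\[\notag
	\mu(K) \leq \sum_{j=1}^N \mu(K \cap B_j) \leq C \exp\!\left(\frac{-\alpha_1}{cap_\omega(K)^{1/n}}\right)
\]
after absorbing the finite sum, where $\alpha_1 = A^{-1/n} \min_j \beta_j$. The main delicate point is to ensure all constants depend only on $X$ and the global H\"older exponent: this requires that the local functions $v_j$ produced by Lemma~\ref{lem:global-local-sub-solution} can be chosen with uniform H\"older exponent, and that the cover $\{B_j \subset\subset B_j'\}$ is fixed once and for all in terms of $(X, \omega)$.
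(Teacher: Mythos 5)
Your proposal is correct and follows essentially the same route as the paper, which likewise obtains the estimate by combining the local Dinh--Nguyen--Sibony exponential volume-capacity bound for Monge--Amp\`ere measures of H\"older continuous psh functions with the comparability of $cap_\omega$ and the local Bedford--Taylor capacity from \cite[p.~52]{kol05}, patched over a finite cover of charts. Your added remarks on the uniformity of the local H\"older exponent (inherited from the global subsolution via local potentials of $\omega$) and on the direction of the capacity comparison are exactly the points implicit in the paper's brief argument.
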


\begin{cor}\label{cor:halpha} Assume that $\mu \in \cM$ and fix $\tau>0$. Then, there exists $C_\tau >0$ such that for every compact set $K \subset X$
\[\label{eq:htau}
	\mu(K) \leq C_\tau \left[cap_\omega(K)\right]^{1+\tau}.
\]
The set of measures satisfying this inequality is denoted by $ \cH(\tau)$.
\end{cor}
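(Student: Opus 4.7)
The plan is to deduce this directly from Lemma~\ref{lem:DNS} by observing that the exponential decay estimate there beats any polynomial bound in the capacity. The only input from geometry/analysis is the inequality $\mu(K) \leq C \exp(-\alpha_1/[cap_\omega(K)]^{1/n})$; everything after that is a one-variable calculus exercise.

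More concretely, first I would set $t := cap_\omega(K)$ and note that $t \in [0, M]$ where $M := cap_\omega(X) < \infty$ (the capacity of the whole manifold is finite, since it is controlled by the mass of $\omega_v^n$ for $v \in PSH(\omega)$ with $0 \leq v \leq 1$). If $t = 0$, the lemma already forces $\mu(K) = 0$, so the inequality is trivial. For $t \in (0, M]$, I would write the desired inequality in the equivalent form
\[\notag
C \exp\!\left(\frac{-\alpha_1}{t^{1/n}}\right) \leq C_\tau\, t^{1+\tau},
\]
i.e.\ $C \leq C_\tau \cdot g(t)$ with $g(t) := t^{1+\tau} \exp(\alpha_1/t^{1/n})$.

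The remaining step is to check that $\inf_{t \in (0, M]} g(t) > 0$. The function $g$ is continuous and strictly positive on $(0, M]$, and as $t \to 0^+$ the factor $\exp(\alpha_1/t^{1/n})$ grows faster than any negative power of $t$, so $g(t) \to +\infty$. In particular $g$ attains a positive minimum on $(0, M]$, and setting $C_\tau := C / \inf_{(0,M]} g$ gives the desired constant. There is no real obstacle here; the corollary is essentially a restatement of Lemma~\ref{lem:DNS} after converting exponential decay into polynomial decay, and the constant $C_\tau$ blows up as $\tau \to \infty$, reflecting the fact that the exponential bound is genuinely stronger than any $(1+\tau)$-power bound.
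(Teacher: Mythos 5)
Your proof is correct and follows the same (essentially only) route as the paper, which states the corollary as an immediate consequence of Lemma~\ref{lem:DNS}: the exponential volume-capacity decay dominates any power $[cap_\omega(K)]^{1+\tau}$ on the bounded range of the capacity. The only point worth recording is that finiteness of $cap_\omega(X)$ on a compact Hermitian manifold rests on the Chern--Levine--Nirenberg type uniform bound $\int_X \omega_v^n \leq C(\omega)$ for $0\leq v\leq 1$ (see \cite{DK12}), not merely on the definition, but this is standard and your calculus argument is otherwise complete.
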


The proof of the next statement can be found in  \cite[Theorem~2.1]{DMN15}.
\begin{lem}\label{lem:smooth-approximation} Let $u\in PSH(\omega)  \cap C^{0,\alpha} (X)$ with $0<\alpha<1$. Then there exists a sequence of smooth $\omega$-psh function $\{u_j\}_{j\geq 1}$ such that 
$$u_j \rightarrow u $$
in $C^{0,\alpha'}(X)$  as $j \to +\infty$,  for any $0< \alpha' < \alpha$.
\end{lem}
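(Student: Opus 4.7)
The plan is to produce smooth $\omega$-psh functions $u_j$ with $u_j\to u$ uniformly on $X$ and $[u_j]_{C^{0,\alpha}(X)}$ uniformly bounded; the statement will then follow at once from the classical interpolation inequality
$$[v]_{C^{0,\alpha'}(X)}\leq C\,\|v\|_{L^\infty(X)}^{1-\alpha'/\alpha}\,[v]_{C^{0,\alpha}(X)}^{\alpha'/\alpha}$$
applied to $v=u_j-u$. So the whole task reduces to building such a sequence.

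To build the smoothings I would mollify along the Chern exponential map of $(X,\omega)$. Fix a smooth nonnegative radial mollifier $\chi$ on $\bC^n$ with $\int\chi=1$ and support in the unit ball, and for small $\epsilon>0$ set
$$u_\epsilon(x):=\int_{T_xX}u(\exp_x(\epsilon\zeta))\,\chi(|\zeta|^2)\,dV(\zeta).$$
Each $u_\epsilon$ is smooth on $X$, and the H\"older hypothesis on $u$ immediately yields $\|u_\epsilon-u\|_{L^\infty(X)}\leq C[u]_{C^{0,\alpha}}\,\epsilon^\alpha$ and $[u_\epsilon]_{C^{0,\alpha}(X)}\leq C[u]_{C^{0,\alpha}}$. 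Differentiating under the integral sign in holomorphic normal coordinates for $\omega$ at a point, and keeping track of the non-holomorphicity of $\exp_x$ and of the torsion of $\omega$, one expects the key pointwise bound
$$\omega+dd^c u_\epsilon\geq-K\epsilon\,\omega\quad\text{on }X,$$
where $K>0$ depends only on the Hermitian data of $(X,\omega)$.

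The $-K\epsilon\,\omega$ defect is absorbed by a simple rescaling: the inequality above rewrites as $(1+K\epsilon)\omega+dd^c u_\epsilon\geq 0$, so $v_\epsilon:=u_\epsilon/(1+K\epsilon)$ is smooth and genuinely $\omega$-psh. Since $u$ (hence $u_\epsilon$) is uniformly bounded on the compact manifold $X$, one has $|v_\epsilon-u_\epsilon|\leq K\epsilon\,\|u\|_{L^\infty}$, so $v_\epsilon\to u$ uniformly at rate $O(\epsilon^\alpha)$ while $[v_\epsilon]_{C^{0,\alpha}}\leq[u_\epsilon]_{C^{0,\alpha}}\leq C[u]_{C^{0,\alpha}}$. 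Taking $u_j:=v_{1/j}$ and plugging into the interpolation inequality closes the argument.

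I expect the main obstacle to be the inequality $\omega+dd^c u_\epsilon\geq -K\epsilon\,\omega$. For smooth $\omega$-psh $u$ it comes from a careful Taylor expansion in normal coordinates, in which both a curvature contribution and a torsion contribution appear---the latter being the genuinely Hermitian term, which vanishes in the K\"ahler case and which is absent in Demailly's classical approximation. For a merely H\"older continuous $u\in PSH(\omega)\cap C^{0,\alpha}(X)$ one has to approximate $u$ pointwise from above by smooth $\omega$-psh functions---itself a nontrivial Hermitian fact, essentially from Dinh--Nguyen--Sibony \cite{DNS10}---and then pass to the limit in the inequality.
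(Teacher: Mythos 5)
Your reduction---uniform convergence plus a uniform $C^{0,\alpha}$ bound, then the interpolation inequality---is sound, but the heart of the lemma is exactly the inequality you yourself flag as ``the main obstacle'', and there the proposal has a genuine gap. The claimed bound $\omega+dd^c u_\epsilon\ge -K\epsilon\,\omega$ with $K$ depending only on the Hermitian data of $(X,\omega)$ is false as stated. For Demailly's regularization \eqref{eq:phie} the correct estimate (see \cite{De94}, \cite{BD12}) has the form $\omega+dd^c\rho_\delta u\ge -\bigl(A\,\lambda(z,\delta)+K\delta\bigr)\,\omega$, where $\lambda(z,\delta)$ is the $\log t$-derivative of $\rho_t u(z)+Kt^2$ at $t=\delta$, a quantity measuring the local mass of $dd^c u$ at scale $\delta$. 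This term is multiplied by the curvature bound $A$, so it is \emph{not} the torsion contribution and does not disappear in the K\"ahler case (only in the flat local situation does mollification preserve positivity exactly), and it is not $O(\delta)$ uniformly in $u$: for a Lelong-type singularity it is of order $1$, and for $u\in C^{0,\alpha}$ the H\"older bound only controls its average in $\log t$ over $[\delta/e,\delta]$ by $C[u]_{C^{0,\alpha}}\delta^{\alpha}$, not its pointwise value. So any correct version must carry a constant depending on $[u]_{C^{0,\alpha}}$ and a rate like $\delta^{\alpha}$ (the rate alone your rescaling could absorb), and, more importantly, you have not proved it: differentiating under the integral sign presupposes $u$ smooth, and your proposed remedy---approximate $u$ from above by smooth $\omega$-psh functions, ``essentially from \cite{DNS10}'', and pass to the limit---is both misattributed (\cite{DNS10} concerns exponential volume--capacity estimates, not regularization) and essentially circular, since uniform approximation by smooth $\omega$-psh functions is a weak form of the very lemma being proved.

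This is precisely why the paper does not prove the lemma by direct mollification but quotes \cite[Theorem~2.1]{DMN15}, and why both that argument and the paper's Lemma~\ref{kis} (following \cite{De94}, \cite{BD12}, \cite{demailly-et-al14}) work with the Kiselman--Legendre transform \eqref{kisleg} rather than with $\rho_\delta u$ itself: its Hessian bound \eqref{hessest} is uniform in $u$, and the H\"older continuity of $u$ is used only afterwards, to show that the transform (suitably rescaled as in Section 3) remains $O(\delta^{\alpha'})$-close to $u$. If you want a self-contained proof, follow that route; asserting the clean $-K\epsilon\,\omega$ bound for the naive mollification skips the actual content of the result.
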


We need also an estimate which for K\"ahler manifolds was given in \cite{EGZ09}.

\begin{prop}\label{prop:l1-stability} Suppose $\psi \in PSH(\omega) \cap C^0(X)$ and $\psi \leq 0$. Let $\mu$ satisfy the inequality \eqref{eq:htau} for some $\tau>0$, i.e.  $\mu \in \cH(\tau)$. Assume that $\vphi \in PSH(\omega) \cap C^0(X)$ solves
\[\notag
	(\omega +dd^c \vphi)^n = \mu .
\]
 Then for  $\gamma = \frac{1}{1+ (n+2)(n+\frac{1}{\tau }) }$ and some positive $C>0$  depending only on $\tau, \omega$ and $\|\psi\|_\infty$ we have
\[\notag
	\sup_X(\psi - \vphi) \leq C \left\|(\psi - \vphi)_+\right\|_{L^1(d\mu)}^{\gamma}.
\]

\end{prop}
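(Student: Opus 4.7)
The plan is to adapt the $L^1$-stability argument of Eyssidieux-Guedj-Zeriahi \cite{EGZ09} to the Hermitian setting, handling the corrections coming from $d\omega \neq 0$ by appealing to the comparison principle of \cite{KN15} and to the capacity-volume bound of Corollary~\ref{cor:halpha}. Set $M := \sup_X(\psi - \varphi)$ and $E := \|(\psi - \varphi)_+\|_{L^1(d\mu)}$. If $M \leq 0$ there is nothing to prove, so I assume $M > 0$ and aim to show $M \leq C E^\gamma$.

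First, I would introduce a one-parameter family of comparison functions: fix a smooth strictly $\omega$-psh function $\rho_0$ with $-A \leq \rho_0 \leq 0$, and for small parameters $\varepsilon, s > 0$ to be chosen set $\rho_\varepsilon := (1-\varepsilon)\psi + \varepsilon \rho_0 \in PSH(\omega)$. The relevant set is the open sublevel set
\[\notag
U(\varepsilon, s) := \{\varphi + s < \rho_\varepsilon + M - C_0 \varepsilon\},
\]
which is nonempty once $\varepsilon, s \ll M$. On $U(\varepsilon,s)$ one has $\psi - \varphi > M - C_1(\varepsilon + s)$, so Chebyshev yields the upper bound $\mu(U) \leq E / (M - C_1(\varepsilon + s))$.

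Second, I would apply the Hermitian comparison principle of \cite{KN15} to the pair $(\varphi, \rho_\varepsilon + M - C_0 \varepsilon - s)$ on $U$. In the K\"ahler case this would immediately give $\int_U (\omega + dd^c \rho_\varepsilon)^n \leq \int_U d\mu$, which combined with $(\omega + dd^c \rho_\varepsilon)^n \geq \varepsilon^n (\omega + dd^c \rho_0)^n \geq c \varepsilon^n \omega^n$ would directly bound the volume of $U$; in the Hermitian case one picks up an extra error term, a capacity-type integral involving $d\omega$. To reabsorb this error I would use that $\mu \in \mathcal{H}(\tau)$ via Corollary~\ref{cor:halpha}, trading the error against $[cap_\omega(U)]^{1+\tau}$. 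After some bookkeeping this leads to a schematic inequality
\[\notag
\varepsilon^n \, cap_\omega(V) \leq C \left( \frac{E}{M - C_1(\varepsilon + s)} + [cap_\omega(U)]^{1+\tau} \right)^\alpha
\]
for a slightly shrunk $V \subset U$ and an exponent $\alpha$ depending only on $n$.

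Finally, I would use that $V$ contains the non-pluripolar set $\{\psi - \varphi \geq M - O(\varepsilon + s)\}$ to secure a lower bound $cap_\omega(V) \geq c_0 > 0$. Choosing $s = M/4$ and then balancing $\varepsilon$ against $E$ so that the two terms on the right are comparable gives $M^{1 + (n+2)(n + 1/\tau)} \leq C' E$, i.e.\ $M \leq C E^\gamma$ with the claimed exponent. The main obstacle is making the Hermitian comparison principle quantitative in step two: because $\omega$ is not closed, the integration by parts produces error terms of the form $\int_U d\omega \wedge d^c \varphi \wedge (\cdots)$ that are not a priori small as $\varepsilon \to 0$. Controlling them uniformly requires iterated Chern-Levine-Nirenberg inequalities and the bootstrap developed in \cite{KN15}, and the loss incurred in this iteration is precisely what produces the factor $(n+2)$ in the denominator of $\gamma$, in contrast to the simpler K\"ahler exponent in \cite{EGZ09}.
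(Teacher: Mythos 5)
Your overall scheme is the same as the paper's: sublevel sets of $\varphi$ against $(1-\varepsilon)\psi$, a Chebyshev bound $\mu(U)\le E/(M-C_1(\varepsilon+s))$, a capacity--volume comparison, and a final balancing of $\varepsilon$, $s$. The Hermitian error terms you are worried about are not really the issue: they are already packaged in \cite[Lemma~5.4]{KN15}, which gives the clean estimate \eqref{eq:cap-growth}, $t^n\,cap_\omega(U(\varepsilon,s))\le C\int_{U(\varepsilon,s+t)}\omega_\varphi^n$, at the price of restricting $s,t$ to the range $\lesssim\min\{\varepsilon^n,\varepsilon^3/B\}$ (this restriction, not an iterated CLN bootstrap, is what eventually produces the factor $n+2$). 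The genuine gap is in your last step: you claim that since $V$ contains the non-pluripolar set $\{\psi-\varphi\ge M-O(\varepsilon+s)\}$ you may take $cap_\omega(V)\ge c_0>0$ with a \emph{uniform} constant. Non-pluripolarity only gives $cap_\omega(V)>0$: the near-maximum set of $\psi-\varphi$ can be an arbitrarily thin neighbourhood of a single peak point, so its capacity admits no lower bound independent of $\varphi$, $\psi$, $\varepsilon$, $s$. With such a uniform bound the balancing would moreover deliver an exponent essentially independent of $\tau$, better than the stated $\gamma=\frac{1}{1+(n+2)(n+\frac1\tau)}$ --- a sign the shortcut is too strong. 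Also, your choice $s=M/4$ leaves the admissible range of \eqref{eq:cap-growth}; in the paper both $s$ and $t$ stay below $\varepsilon_B=\frac13\min\{\varepsilon^n,\varepsilon^3/(16B)\}$, and the discrepancy between $\sup_X(\psi-\varphi)$ and $\inf_X[\varphi-(1-\varepsilon)\psi]$ is absorbed into the $O(\varepsilon)$ term rather than into a large $s$.

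What must replace the uniform bound is a \emph{quantitative, $s$-dependent} capacity lower bound, and this is where the hypothesis $\mu\in\cH(\tau)$ really enters: plugging \eqref{eq:htau} into \eqref{eq:cap-growth} gives Lemma~\ref{lem:cap-level-set}, $t^n\,cap_\omega(U(\varepsilon,s))\le C\,[cap_\omega(U(\varepsilon,s+t))]^{1+\tau}$, and a De Giorgi-type iteration (carried out in \cite{KN3}) converts this into Lemma~\ref{lem:uniform-apriori-estimate}, i.e.\ $cap_\omega(U(\varepsilon,s))\ge (s/C_\tau)^{n/\tau}$. It is the degeneration of this bound as $s\to0$, combined with the forced choice $t\sim\varepsilon_B\gtrsim\varepsilon^{n+2}$, that yields exactly the exponent $\gamma$ after balancing. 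Without this iteration step (or an equivalent substitute) your schematic inequality cannot be closed, so as written the proposal does not establish the stated estimate; note also that your idea of using $\cH(\tau)$ to ``reabsorb'' Hermitian errors misplaces its role --- in the paper it feeds the iteration, not the comparison principle.
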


\begin{proof} Without loss of generality we may assume that $-1\leq \psi \leq 0$. Put \[ \notag U(\vepsilon, s) = \{\vphi<(1-\vepsilon) \psi + \inf_X [\vphi -(1-\vepsilon) \psi] +s \},\]
where $0<\vepsilon<1$ and $s>0.$

\begin{lem} 
\label{lem:cap-level-set}
For  
$0 <s \leq \frac{1}{3}\min\{\varepsilon^n, \frac{\varepsilon^3}{16 B} \}$, 
$0< t \leq \frac{4}{3} (1-\varepsilon) \min\{\varepsilon^n, \frac{\varepsilon^3}{16 B} \}$ we have
\[\notag
	t^n \, cap_{\omega} (U(\varepsilon, s))
	\leq 	 C \left[cap_\omega (U(\vepsilon, s+t))\right]^{1+\tau},
\]
where $C$ is a dimensional constant.
\end{lem}

\begin{proof}[Proof of Lemma~\ref{lem:cap-level-set}]
By \cite[Lemma~5.4]{KN15} 
\[\label{eq:cap-growth}
	t^n \, cap_{\omega} (U(\varepsilon, s))
	\leq 	 C \,  \int_{ U (\varepsilon, s + t) }	\omega_\varphi^n,
\]
The lemma now follows     from \eqref{eq:htau}.                             
\end{proof}


\begin{lem}\label{lem:uniform-apriori-estimate}
Fix $0<\vepsilon<3/4$ and $\vepsilon_B := \frac{1}{3} \min\{\vepsilon^n, \frac{\vepsilon^3}{16B}\}$. Then, there exists a contant $C_\tau = C(\tau, \omega)$ such that for $0<s<\vepsilon_B$, 
\[\notag
	s\leq C_\tau 
	\left[cap_\omega(U(\vepsilon,s)) \right]^\frac{\tau}{n}.
\]
\end{lem}

\begin{proof}[Proof of Lemma~\ref{lem:uniform-apriori-estimate}] 
Let us use the notation 
$$ a(s) := \left[cap_\omega (U(\vepsilon, s)) \right]^\frac{1}{n}.
$$
It follows easily from \eqref{eq:cap-growth} that 
\[\notag
	t a(s) \leq C \left[a(s+t)\right]^{1+\tau}.
\]
This is the inequality \cite[Eq. (3.6)]{KN3}. The arguments that follow in that paper complete the proof of the present lemma.
\end{proof}

To finish the proof of the proposition  we proceed as in \cite[Theorem~3.11]{KN3}.  One needs  to estimate
$$-S:= \sup_X (\psi - \vphi) > 0$$ in terms of $\|(\psi - \vphi)_+\|_{L^1(d\mu)}$ as in the K\"ahler case \cite{kol03}. Suppose that 
\[ \label{slr-eq2} \|(\psi - \vphi)_+\|_{L^1(d\mu)} \leq \vepsilon^{a} 
\]
for $0< \vepsilon << 3/4$ and $a=\frac{1}{\gamma }$.
Let  
\[ \notag \hbar(s) := (s/C_\tau)^\frac{1}{\tau} \]  be the inverse function of $C_\tau   s^\tau$. 
 Consider sublevel sets $U(\vepsilon, t) = \{\vphi< (1-\vepsilon) \psi + S_\vepsilon +t \}$, where $S_\vepsilon = \inf_X [\vphi -(1-\vepsilon)\psi]$. 
It is clear that \[ S - \vepsilon \leq S_\vepsilon \leq S.\] 
Therefore, $U(\vepsilon,2t) \subset \{\vphi < \psi + S+ \vepsilon +2t\}$. Then, $(\psi - \vphi)_+ \geq |S| - \vepsilon -2t>0$ for $0< t < \vepsilon_B$ and $0< \vepsilon < |S|/2$ 
on the latter set (if $|S| \leq 2 \vepsilon$ then we are done).

By \eqref{eq:cap-growth} we have
\begin{align*}
	cap_{\omega}(U(\vepsilon,t)) 
	\leq \frac{C}{t^n} \int_{U(\vepsilon,2t)} d\mu
&	\leq \frac{C}{t^n} \int_X \frac{(\psi -\vphi)_+}{(|S| - \vepsilon -2t)}
		d\mu\\
&	\leq \frac{C \|(\psi - \vphi)_+\|_{L^1(d\mu)}}{t^n (|S| - \vepsilon -2t)} .
\end{align*}
Moreover, by Lemma~\ref{lem:uniform-apriori-estimate} \[\notag \hbar(t) \leq[ cap_\omega(U(\vepsilon,t))]^\frac{1}{n}. \] 
Combining these inequalites, we obtain
\[\notag
	(|S| - \vepsilon - 2t) 
	\leq \frac{C \|(\psi - \vphi)_+\|_{L^1(d\mu)}}{t^n [\hbar(t)]^n} .
\]
Therefore, using \eqref{slr-eq2},
\begin{align*}
|S| 
&	\leq \vepsilon + 2t 
	+ \frac{C \|(\psi - \vphi)_+\|_{L^1(d\mu)}}{t^n [\hbar(t)]^n}  \\
&	\leq 3 \vepsilon + \frac{C \vepsilon^a}{t^n [\hbar(t)]^n}.
\end{align*}
Recall that $\vepsilon_B = \frac{1}{3} \min\{\vepsilon^n, \frac{\vepsilon^3}{16B}\}$. So, taking 
$
	t = \vepsilon_B/2 \geq \vepsilon^{n+2} $
we have
\[\notag
	\hbar(t) = \left(\frac{t}{C_\tau }\right)^{1/\tau} 
	\geq C \vepsilon^{(n+2)/\tau}.
\]
With our choice of $a$ 
\[\notag \frac{\vepsilon^{a}}{\vepsilon^{n(n+2)+ \frac{(n+2)}{\tau}}} =\vepsilon.\] 
Hence  $|S| \leq C \vepsilon$ with $C = C(\tau, \omega)$. Thus,
\[\notag
	\sup_X(\psi -\vphi) \leq C \|(\psi - \vphi)_+\|_{L^1(d\mu)}^\frac{1}{a}.
\]
This is the desired stability estimate.
\end{proof}

Following \cite{De94} we 
consider $\rho_{\delta }\vphi$- the regularization of the $\omega$-psh function $\vphi$ defined  by

\begin{equation}\label{eq:phie}
\rho_\delta \vphi(z)=\frac{1}{\delta ^{2n}}\int_{\zeta\in T_{z}X}
\vphi({\exp} h_z(\zeta))\rho\Big(\frac{|\zeta|^2_{\omega }}{\delta ^2}\Big)\,dV_{\omega}(\zeta),\ \delta>0;
\end{equation}
where $\zeta \to {\exp}h_z(\zeta)$ is the (formal) holomorphic part of the Taylor expansion of the exponential map of  the Chern connection on the tangent bundle of $X$ associated to $\omega $, and the modifier 
 $\rho: \mathbb R_{+}\rightarrow\mathbb R_{+}$  is given by
$$\rho(t)=\begin{cases}\frac {\eta}{(1-t)^2}\exp(\frac 1{t-1})&\ {\rm if}\ 0\leq t\leq 1,\\0&\
{\rm if}\ t>1\end{cases}$$
 with the constant $\eta$ chosen so that
\begin{equation}\label{total integral}
\int_{\mathbb C^n}\rho(\Vert z\Vert^2)\,dV(z)=1 ,
\end{equation}
where $dV$ denotes the Lebesgue measure in $\mathbb C^n$).

The proof of the following variation of  \cite[Proposition~3.8]{De94} and \cite[Lemma 1.12]{BD12} was given in \cite{KN2}.

\begin{lem}\label{kis}
 Fix $\vphi \in PSH(\omega) \cap L^\infty(X)$.  Define the Kiselman-Legendre transform with level $b>0$ by
 \begin{equation}\label{kisleg}
 \Phi_{\delta , b } (z)= \inf _{ t\in [0,\delta ]}\left(\rho_{t }\vphi (z)+ Kt^2  + Kt -b \log\frac{t}{\delta } \right),
 \end{equation}
Then for  some positive constant $K$  depending on the curvature, the function $\rho_{t } \vphi+Kt^2$ is increasing in $t$ and
 the following estimate holds:
\begin{equation}\label{hessest}
\omega+dd^c  \Phi_{\delta,b }\geq -(A b+2K\delta)\,\omega,
\end{equation}
where $A$ is a lower bound of the negative part of the Chern curvature of $\omega$.
\end{lem}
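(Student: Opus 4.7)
The proof follows Demailly's strategy \cite{De94} for regularizing quasi-psh functions on a Hermitian manifold, with careful tracking of curvature-induced error terms; a detailed version appears in \cite{KN2}.

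\emph{Step 1: Monotonicity.} First, I would establish that $t \mapsto \rho_t \vphi(z) + Kt^2$ is non-decreasing in $t$. Differentiating \eqref{eq:phie} in $t$ after the substitution $\zeta \mapsto \zeta/t$, and moving derivatives of the kernel $\rho$ onto $\vphi$ by integration by parts, expresses $\d_t(\rho_t \vphi)$ as a non-negative term arising from $\omega + dd^c\vphi \geq 0$ plus error terms of order $t$ produced by the Chern torsion and curvature (the exponential map of the Chern connection is not holomorphic in the Hermitian setting). Choosing $K$ larger than a curvature constant makes the $2Kt$ contribution from $\d_t(Kt^2)$ dominate these errors, which gives monotonicity.

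\emph{Step 2: Uniform Hessian bound for the mollifiers.} A parallel computation, now differentiating twice in $z$, yields a pointwise bound of the form $\omega + dd^c(\rho_t \vphi + Kt^2 + Kt) \geq -\lambda t\,\omega$ with $\lambda$ depending only on the Chern curvature. The linear term $Kt$ is included so that in the next step its positive derivative in $t$ can offset the concavity of $-b\log t$.

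\emph{Step 3: Kiselman-Legendre infimum.} Apply the Kiselman minimum principle to the family $u_t(z) - b\log(t/\delta)$ on $X \times (0,\delta]$. After the change of variable $s = -\log(t/\delta)$, the expression is jointly plurisubharmonic in $(z,s)$ modulo the Step 2 error, so taking the infimum in $t$ preserves $\omega$-plurisubharmonicity in $z$ up to controlled corrections. The $-\lambda t\,\omega$ bound contributes at most $-2K\delta\,\omega$ on the right-hand side (choosing $K$ to absorb $\lambda$ and using $t\leq \delta$), while the $b$-parameter in the Kiselman-Legendre transform introduces an additional $-Ab\,\omega$ coming from the fact that $\log$ has subharmonic mass proportional to $b$; together these give \eqref{hessest}.

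\emph{Main obstacle.} The essential difficulty is Step 1 in the non-K\"ahler setting: on a general Hermitian manifold the convolution $\rho_t \vphi$ need not be $\omega$-psh, and the quadratic correction $Kt^2$ together with the specific form of the modifier $\rho$ must be engineered so that the $t$-derivative controls the Chern-geometric error uniformly. Once monotonicity is in hand, the Kiselman trick proceeds as in the K\"ahler case, with the $-2K\delta\,\omega$ penalty in \eqref{hessest} measuring precisely how far the Hermitian setting is from K\"ahler.
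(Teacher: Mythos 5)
Your Step 1 is fine and is essentially the content of Demailly's computation that the paper imports, but Step 2 contains the fatal error. The bound you claim there, $\omega+dd^c(\rho_t\vphi+Kt^2+Kt)\geq -\lambda t\,\omega$ with $\lambda$ a constant depending only on the Chern curvature, is false in general. The correct estimate (Berman--Demailly, Eq.~(1.8), which is what the paper's argument runs on) reads, for $\Phi(z,w)=\rho_{|w|}\vphi(z)$,
\[\notag
\omega(z)+dd^c\Phi(z,w)\geq -A\,\lambda(z,|w|)\,|dz|^2-K\bigl(|w|^2|dz|^2+|dz||dw|+|dw|^2\bigr),
\qquad \lambda(z,t)=\frac{\partial(\rho_t\vphi(z)+Kt^2)}{\partial\log t},
\]
and for a merely bounded $\vphi$ the slope $\lambda(z,t)$ tends to $0$ only pointwise (because Lelong numbers vanish), not uniformly and certainly not like $O(t)$. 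If your Step 2 were true, the whole Kiselman--Legendre construction would be superfluous: one could take $t=\delta$ directly and obtain \eqref{hessest} with no $-Ab\,\omega$ term, which is not what this method can deliver. Consequently your accounting in Step 3 of where $-Ab\,\omega$ comes from is also wrong: $-b\log|w|$ is pluriharmonic in $w\neq 0$ and contributes no ``subharmonic mass.'' The actual mechanism is that at the minimizing scale $t_0(z)$ the first-order condition for the infimum gives $\lambda(z,t_0)+Kt_0\leq b$, so $\lambda(z,|w|)\leq b$ near the minimizer; inserting this into the $-A\lambda|dz|^2$ term is precisely what produces $-Ab\,\omega$, and only then is Kiselman's minimum principle applied in the complex fiber variable $w$ (with $t=|w|$, the expression being independent of $\arg w$).

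Two further points of the paper's proof that your sketch misses: the linear term $Kt$ is not there to ``offset the concavity of $-b\log t$'' (no such offset is needed, the log being pluriharmonic in $w$); it is there because $dd^c_w(K|w|)=\frac{K}{4|w|}\,i\,dw\wedge d\bar w\geq \frac{K}{4\delta}|dw|^2$ for $|w|\leq\delta$, which absorbs the mixed term $K|dz||dw|$ via $|dz||dw|\leq\frac{1}{4\delta}|dw|^2+\delta|dz|^2$; together with the $K(|w|^2+\delta)|dz|^2$ piece this is exactly the source of the $-2K\delta\,\omega$ in \eqref{hessest}. Finally, the vanishing of Lelong numbers (boundedness of $\vphi$) is used to show that the infimum in \eqref{kisleg} is attained at some $t_0(z)>0$, which both makes $\Phi_{\delta,b}$ smooth (implicit function theorem) and makes the bound $\lambda\leq b$ available on a neighborhood of the minimizer. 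As it stands, your argument would prove a statement strictly stronger than the lemma and cannot be repaired without restoring the $\lambda(z,t)$-dependence; I recommend reworking Steps 2--3 around the Berman--Demailly Hessian inequality and the first-order condition at $t_0$.
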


The next lemma is essentially proven in  \cite[Theorem 4.3]{demailly-et-al14} or \linebreak 
\cite[Lemma~3.3,  Proposition~4.4]{DN16}.
The adaption of those proofs to the case of  compact Hermitian manifolds is straightforward.

\begin{lem}\label{lem:l1-bound} Let $\mu \in \cM$ and  $\vphi \in PSH(\omega) \cap L^\infty(X)$. Then, there exists $0< \alpha_1 < 1$ such that
\[\label{eq:l1-differ-regularisation-phi}
	\|\rho_\delta \vphi - \vphi\|_{L^1(d\mu)} \leq C \delta^{\alpha_1}. 
\]
\end{lem}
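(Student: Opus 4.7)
The plan is to reduce the $L^1(d\mu)$ estimate to one against $\omega_u^n$ for the H\"older continuous subsolution $u$ supplied by $\mu\in\cM$, and then exploit the H\"older regularity of $u$ via smooth approximation and integration by parts.

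First, by Lemma~\ref{kis} the function $t\mapsto \rho_t\vphi + Kt^2$ is non-decreasing on $(0,\delta]$ and tends to $\vphi$ as $t\to 0^+$, so $\rho_\delta\vphi - \vphi \geq -K\delta^2$ pointwise. Hence
\[\notag
|\rho_\delta\vphi-\vphi|\leq (\rho_\delta\vphi-\vphi+K\delta^2)+K\delta^2,
\]
and it suffices to bound the non-negative quantity $\int(\rho_\delta\vphi - \vphi + K\delta^2)\,d\mu$ by $O(\delta^{\alpha_1})$. Dominating by $\mu\leq C_0(\omega+dd^c u)^n$ with $u\in C^{0,\alpha}(X)\cap PSH(\omega)$ and using non-negativity of the integrand, the problem reduces to estimating $\int(\rho_\delta\vphi - \vphi + K\delta^2)(\omega+dd^c u)^n$.

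Next, apply Lemma~\ref{lem:smooth-approximation} to replace $u$ by smooth $\omega$-psh approximants $u_j\to u$ in $C^{0,\alpha'}(X)$ with $0<\alpha'<\alpha$, and expand $(\omega+dd^c u_j)^n$ binomially. On each mixed term $\int(\rho_\delta\vphi - \vphi)(dd^c u_j)^k\wedge \omega^{n-k}$, integrate by parts to move the $dd^c$-factors onto $\rho_\delta\vphi$ and $\vphi$, using the elementary smoothing estimates $\|u_j - \rho_\delta u_j\|_\infty \leq C\delta^{\alpha'}$, $\|dd^c\rho_\delta u_j\|_\infty \leq C\delta^{\alpha'-2}$, together with the standard convolution bound $\|\rho_\delta\vphi - \vphi\|_{L^1(\omega^n)} = O(\delta^2)$ for bounded $\omega$-psh $\vphi$ (whose Laplacian with respect to $\omega$ is a finite positive measure). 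These combine to an $O(\delta^{\alpha'})$ bound on each term; passing $j\to\infty$ via weak convergence $(\omega+dd^c u_j)^n\to(\omega+dd^c u)^n$, plus the uniform $L^\infty$-bound on $\rho_\delta\vphi - \vphi$, yields the claim with $\alpha_1 = \alpha'$.

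The main obstacle is that in the Hermitian setting $d\omega\neq 0$, so each integration by parts produces torsion error terms involving $d\omega$ and $dd^c\omega$. Although each such term is a smooth bounded form, one must verify carefully that their cumulative contribution preserves the $\delta^{\alpha'}$-scaling across the iterated integration by parts in all binomial terms; this is precisely the Hermitian adaptation of the arguments of \cite{demailly-et-al14,DN16}, carried out along the lines of \cite{KN15}.
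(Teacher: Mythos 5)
Your overall strategy---using Lemma~\ref{kis} to make $\rho_\delta\vphi-\vphi+K\delta^2$ non-negative, dominating $\mu$ by $C_0\,\omega_u^n$, and then smoothing the H\"older potential so that $dd^c$-factors can be traded for sup-norm errors by integration by parts---is exactly the route of the sources the paper cites for this lemma (\cite{demailly-et-al14}, \cite{DN16}); the paper itself does not reprove it but only remarks that the Hermitian adaptation is straightforward. However, your quantitative bookkeeping has a genuine gap: you regularize $u_j$ at the \emph{same} scale $\delta$ as $\vphi$. After all the swaps, the main term in the $k$-th binomial piece is $\int (\rho_\delta\vphi-\vphi+K\delta^2)\,(dd^c\rho_\delta u_j)^k\wedge\omega^{n-k}$, and your own estimates give for it only $\|dd^c\rho_\delta u_j\|_\infty^{k}\,\|\rho_\delta\vphi-\vphi+K\delta^2\|_{L^1(\omega^n)}\le C\,\delta^{\,k(\alpha'-2)+2}$, whose exponent is negative for every $k\ge 2$ (e.g. $k=n=2$, $\alpha'<1$). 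So the assertion that ``each term is $O(\delta^{\alpha'})$'' fails for $n\ge 2$, and the conclusion $\alpha_1=\alpha'$ cannot be obtained this way. The standard repair---and what the cited proofs actually do---is to decouple the two scales: smooth $u$ (or $u_j$) at a scale $\epsilon$, obtaining a total bound of the form $C\bigl(\epsilon^{\alpha'}+\delta^2\,\epsilon^{\,n(\alpha'-2)}\bigr)$, and then optimize by choosing $\epsilon=\delta^{2/(2n-(n-1)\alpha')}$, which proves the lemma with the smaller (but positive) exponent $\alpha_1=\frac{2\alpha'}{2n-(n-1)\alpha'}$; since the statement only asks for some $\alpha_1>0$, this suffices.

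Two further points need attention. First, the error produced by each swap must be bounded by $\|u_j-\rho_\epsilon u_j\|_\infty$ times the mass of $dd^c(\rho_\delta\vphi-\vphi)$ wedged with the remaining factors; this requires writing $dd^c\rho_\delta\vphi$ and $dd^c\vphi$ as differences of positive forms and invoking Chern--Levine--Nirenberg bounds that are uniform in $j$, $\delta$ and $\epsilon$, and in the Hermitian setting the torsion contributions ($d\omega$, $dd^c\omega$, and the terms with a single $d$ or $d^c$) are controlled by the Cauchy--Schwarz inequalities of \cite{cuong16}, exactly as in the proof of Lemma~\ref{lem:capacity-convergence}; you flag this obstacle but it is where the Hermitian adaptation actually has to be carried out. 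Second, the passage $j\to\infty$ is not justified by ``weak convergence plus an $L^\infty$ bound'': the integrand $\rho_\delta\vphi-\vphi+K\delta^2$ is in general only lower semicontinuous (since $\vphi$ is merely bounded and upper semicontinuous), so you should instead use that for a non-negative lsc function $w$ and weakly convergent measures one has $\int w\,d\nu\le\liminf_j\int w\,d\nu_j$, which fortunately is precisely the direction needed here.
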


\section{Proof of Theorem~\ref{thm:holder}}

The necessary condition follows easily. It remains to prove the other one. As $\mu \in \cM$ there exists $u \in PSH(\omega) \cap C^{0,\alpha _0}(X)$ with $0< \alpha _0 \leq 1$, and $C_0>0$ such that
\[
	\mu \leq C_0 (\omega + dd^c u)^n.
\]
Using Radon-Nikodym's theorem, we write $\mu = C_0 h \omega_u^n$ for a Borel measurable function $0\leq h \leq 1$. Let $u_j$ be the smooth approximation of $u$ as in Lemma~\ref{lem:smooth-approximation} and denote $$\mu_j := C_0 h \omega_{u_j}^n.$$ Then $\mu_j$ converges weakly to $\mu$ as $j\to +\infty$. Using \cite[Theorem~0.1]{KN15} we find $\vphi_j \in PSH(\omega) \cap C^0(X)$ with normalisation $\sup_X \vphi_j =0$, and $c_j>0$ satisfying
\[
	\omega_{\vphi_j}^n = c_j \mu_j. 
\]
The first thing we need to show is the following.

\begin{fact}\label{fact:bound-cst} 
There is a uniform constant $C_1>0$ such that $1/C_1 < c_j < C_1.$
\end{fact}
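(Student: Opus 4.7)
The plan is to integrate both sides of $\omega_{\vphi_j}^n = c_j \mu_j$ over $X$ and to bound each side uniformly in $j$. The first step is to show that the measures $\mu_j$ lie in a common class $\cH(\tau)$ with uniform constant. By Lemma~\ref{lem:smooth-approximation}, the smooth approximations $u_j$ converge to $u$ in $C^{0,\alpha'}(X)$, hence their H\"older seminorms are uniformly bounded in $j$. Since $\mu_j \le C_0 \omega_{u_j}^n$ with $u_j$ a uniformly H\"older $\omega$-psh subsolution, Lemma~\ref{lem:DNS} together with Corollary~\ref{cor:halpha} yields $\tau > 0$ and a constant $C_\tau$, both independent of $j$, so that each $\mu_j \in \cH(\tau)$ with this uniform $C_\tau$.

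For the upper bound $c_j \le C_1$, I would integrate the equation to get $c_j \mu_j(X) = \int_X \omega_{\vphi_j}^n$. The normalization $\sup_X \vphi_j = 0$ together with Hartogs' compactness ensures $\|\vphi_j\|_{L^1(X,\omega^n)} \le C(\omega)$. A standard Stokes-based integration-by-parts argument on the compact Hermitian manifold $X$, converting each factor of $dd^c \vphi_j$ in the binomial expansion of $\omega_{\vphi_j}^n$ into $\vphi_j \cdot dd^c(\cdot)$ applied to a uniformly controlled $(n-k,n-k)$-form, produces a uniform upper bound $\int_X \omega_{\vphi_j}^n \le C(\omega)$. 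Since $\mu_j \to \mu$ weakly and $\mu(X) > 0$, this forces $c_j \le C_1$.

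For the lower bound $c_j \ge 1/C_1$, I would use the already-established upper bound: the rescaled measures $c_j \mu_j$ still lie in $\cH(\tau)$ with uniform constant $C_1 C_\tau$, so the $L^\infty$-estimate from \cite{KN15} gives $\|\vphi_j\|_\infty \le M$ uniformly. Passing to a subsequence, $\vphi_j \to \vphi_\infty$ in $L^1(X)$ with $\vphi_\infty \in PSH(\omega) \cap L^\infty(X)$ and $\sup_X \vphi_\infty = 0$; the stability estimate in Proposition~\ref{prop:l1-stability} may be used to strengthen this to uniform convergence if required for the continuity of the Monge-Amp\`ere operator. By the weak continuity of $(\omega + dd^c\cdot)^n$ along uniformly bounded $\omega$-psh sequences and the weak convergence $\mu_j \to \mu$, any subsequential limit $c_\infty$ of $c_j$ satisfies $\omega_{\vphi_\infty}^n = c_\infty \mu$. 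If $c_\infty = 0$ then $\omega_{\vphi_\infty}^n$ is the zero measure on $X$, contradicting the strict positivity of the total Monge-Amp\`ere mass of a bounded $\omega$-psh function (again via Stokes and the torsion estimates, applied in the reverse direction). Hence $c_\infty > 0$ and we obtain $c_j \ge 1/C_1$.

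The main obstacle is securing the two-sided mass bound $1/C \le \int_X \omega_\vphi^n \le C$ independent of $\vphi \in PSH(\omega) \cap L^\infty(X)$ with $\sup_X \vphi = 0$. In the K\"ahler case this is a cohomological identity, but in the Hermitian setting the correction terms coming from $d\omega \neq 0$ must be controlled explicitly in terms of the $L^1$ and $L^\infty$ norms of $\vphi$; the upper bound requires only an $L^1$ estimate on $\vphi_j$, whereas the lower bound is most cleanly obtained via the compactness-plus-continuity argument outlined above.
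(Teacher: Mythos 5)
Your argument hinges on the step ``a standard Stokes-based integration-by-parts argument produces a uniform upper bound $\int_X \omega_{\vphi_j}^n \le C(\omega)$, requiring only an $L^1$ estimate on $\vphi_j$'', and this is precisely where the proposal breaks down in the Hermitian setting. Since $d\omega\neq 0$, the total Monge--Amp\`ere mass is not cohomological: expanding $\int_X\omega_{\vphi_j}^n$ and integrating by parts produces terms such as $\int_X \vphi_j\, dd^c\omega\wedge\omega_{\vphi_j}^{n-2}$ and $\int_X \vphi_j\, d\omega\wedge d^c\omega\wedge\omega_{\vphi_j}^{n-3}$, i.e.\ integrals of $\vphi_j$ against Monge--Amp\`ere--type measures of $\vphi_j$ itself, not against $\omega^n$. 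The known Chern--Levine--Nirenberg bounds control these only in terms of $\|\vphi_j\|_\infty$ (giving $\int_X\omega_{\vphi_j}^n\le C(1+\|\vphi_j\|_\infty^n)$), and in this proof the uniform bound $\|\vphi_j\|_\infty\le C_2$ is not yet available: in the paper it is derived \emph{after} Claim~\ref{fact:bound-cst}, since one needs $c_j$ bounded above to get the volume--capacity inequality for $c_j\mu_j$ with uniform constants before applying \cite[Corollary~5.6]{KN15}. So your upper bound is either unproved (with only $L^1$ control) or circular (if you borrow the $L^\infty$ bound). A secondary gap: in the lower-bound argument you invoke ``weak continuity of $(\omega+dd^c\cdot)^n$ along uniformly bounded $\omega$-psh sequences'' converging in $L^1$; this is false in general (Monge--Amp\`ere is not continuous under $L^1$ convergence of bounded potentials), and the uniform convergence that would repair it is only established later in the paper, again downstream of the claim.

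The paper avoids both issues by never estimating $\int_X\omega_{\vphi_j}^n$ at all. Instead it bounds the \emph{mixed} integral $\int_X\omega_{\vphi_j}\wedge\omega_{u_j}^{n-1}$: with only one factor of $\omega_{\vphi_j}$, integration by parts puts $dd^c\vphi_j$ onto $\omega_{u_j}^{n-1}$, and the resulting terms $\int_X|\vphi_j|\,\omega^k\wedge\omega_{u_j}^{n-k}$ are controlled by the Chern--Levine--Nirenberg inequality using the uniform bound on $\|u_j\|_\infty$ and $\sup_X\vphi_j=0$. On the other side, the mixed-forms inequality gives $\omega_{\vphi_j}\wedge\omega_{u_j}^{n-1}\ge (c_jC_0h)^{1/n}\omega_{u_j}^n$, and the uniform positivity of $\int_X h^{1/n}\omega_{u_j}^n$ (from Bedford--Taylor convergence of $\omega_{u_j}^n$) extracts the bound $c_j<C_1$; the lower bound $c_j>1/C_1$ is then quoted from \cite[Lemma~5.9]{KN15} rather than obtained by a limiting argument. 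To salvage your approach you would need exactly such a device that avoids full Monge--Amp\`ere masses of the unknowns $\vphi_j$.
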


\begin{proof} Since $\mu(X) >0$, it follows that $\int_X h \omega_u^n >0.$ Therefore, $\int_X h^\frac{1}{n} \omega_u^n >0.$ By the 
Bedford-Taylor convergence theorem \cite{BT82}  we know that $\omega_{u_j}^n$ converges weakly to $\omega_u^n$. Since $C^0(X)$ is dense in $L^1(X, \omega_u^n)$, we have
\[\notag
	\int_X h^\frac{1}{n} \omega_{u_j}^n > C
\]
for some uniform $C>0$.  Applying the mixed forms type inequality (see \cite{kol05}, \cite{cuong16}) one obtains
\[\notag
	\omega_{\vphi_j} \wed \omega_{u_j}^{n-1} \geq \left[ \frac{\omega_{\vphi_j}^n}{\omega_{u_j}^n}\right]^\frac{1}{n} \omega_{u_j}^n = (c_jC_0 h)^\frac{1}{n} \omega_{u_j}^n.
\]
On the other hand,
\[\notag
\begin{aligned}
\int_X \omega_{\varphi_j} \wed \omega_{u_j}^{n-1} 
&=	\int_X \omega \wed \omega_{u_j}^{n-1} + \int_X dd^c \vphi_j \wed \omega_{u_j}^{n-1} \\
&= 	\int_X \omega \wed \omega_{u_j}^{n-1} + \int_X \vphi_j dd^c (\omega_{u_j}^{n-1})\\
&\leq		\int_X \omega \wed \omega_{u_j}^{n-1} + B \int_X |\vphi_j| (\omega^2 \wed \omega_{u_j}^{n-2} + \omega^3\wed \omega_{u_j}^{n-3}),
\end{aligned}\]
where $B$ is a constant depending only on $\omega $ (see e.g. \cite{DK12} for details).
Since $\|u_j\|_\infty < C$ and $\sup_X\vphi_j =0$, it follows from the Chern-Levine-Nirenberg type inequality (\cite[Proposition~1.1]{cuong16}) that the right hand side is uniformly bounded. Thus,
\[\notag
	\int_X \omega_{\vphi_j} \wed \omega_{u_j}^{n-1} \leq C.
\]
Combining the above inequalities we get 
 $$c_j < C_1:= \frac{\int_X \omega_{\vphi_j} \wed \omega_{u_j}^{n-1}}{\int_X (C_0 h)^\frac{1}{n} \omega_{u_j}^n} < +\infty.$$
Hence, by \cite[Lemma~5.9]{KN15} we also have  $$c_j > 1/C_1,$$
increasing $C_1$ if necessary. Thus, Claim~\ref{fact:bound-cst} is proven.
\end{proof}

Thanks to  Lemma~\ref{lem:DNS}, Lemma\ \ref{lem:smooth-approximation} and Claim~\ref{fact:bound-cst} measures $\mu _j$  satisfy the volume-capacity inequality \eqref{eq:vol-cap}
with a uniform constant. 
Thus by \cite[Corollary~5.6]{KN15} we have $\|\vphi_j\|_\infty < C_2.$ Passing to a subsequence one may assume  that $\{\vphi_j\}$ is a Cauchy sequence in $L^1(\omega^n)$, and 
$\{c_j\}$ converges. Set
\[\label{eq:limit-eq}
	\vphi := (\limsup_j \vphi_j)^*, \quad c = \lim_j c_j.
\] 
Again passing to a subsequence if necessary we can also assume that
\[
	\vphi_j \rightarrow \vphi \quad \mbox{in } L^1(\omega^n) \quad
	\mbox{as } j \to \infty.
\]

\begin{lem} \label{lem:capacity-convergence} We have $$\int_X |\vphi_k - \vphi| \omega_{u_j}^n \to 0 \quad\mbox{ as } \min\{j, k\} \to \infty.$$
\end{lem}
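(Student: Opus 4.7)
The strategy is to combine two ingredients: (a) the measures $\omega_{u_j}^n$ satisfy a \emph{uniform} volume-capacity estimate, and (b) the limit $\vphi$ is the upper regularization of $\limsup \vphi_k$ with $\|\vphi_k\|_\infty \leq C_2$. For (a), since $u_j \to u$ in $C^{0,\alpha'}(X)$ the family $\{u_j\}$ is uniformly H\"older continuous, so Lemma~\ref{lem:DNS} yields constants $C,\alpha>0$ independent of $j$ with $\omega_{u_j}^n(E) \leq C \exp(-\alpha /\cdot \mathrm{cap}_\omega(E)^{1/n})$ for all Borel sets $E$. In particular $\omega_{u_j}^n(X)$ is uniformly bounded and $\omega_{u_j}^n$-mass of sets of small capacity is uniformly small.

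I would decompose $|\vphi_k - \vphi| = (\vphi_k - \vphi)_+ + (\vphi - \vphi_k)_+$ and first handle the easier upper part. Introduce the envelopes $\tilde{\vphi}_j := (\sup_{l \geq j} \vphi_l)^*$, which form a decreasing sequence of uniformly bounded $\omega$-psh functions with $\tilde\vphi_j \searrow \vphi$ off a pluripolar set, hence $\tilde\vphi_j \to \vphi$ in capacity by Bedford--Taylor. For $k\geq j$ one has $\vphi_k \leq \tilde\vphi_j$ pointwise, so
\[\notag
\int_X (\vphi_k - \vphi)_+\, \omega_{u_j}^n \leq \int_X (\tilde\vphi_j - \vphi)\, \omega_{u_j}^n \leq \varepsilon\, \omega_{u_j}^n(X) + 2C_2\, \omega_{u_j}^n\bigl(\{\tilde\vphi_j - \vphi > \varepsilon\}\bigr).
\]
Combining (a) with $\mathrm{cap}_\omega(\{\tilde\vphi_j > \vphi+\varepsilon\}) \to 0$, the right hand side is arbitrarily small for $j$ large, uniformly in $k\geq j$.

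For the lower part, I would rewrite
\[\notag
\int_X (\vphi - \vphi_k)_+ \,\omega_{u_j}^n = \int_X (\vphi_k - \vphi)_+ \,\omega_{u_j}^n - \int_X (\vphi_k - \vphi)\, \omega_{u_j}^n,
\]
reducing matters to showing $\int_X (\vphi_k - \vphi)\, \omega_{u_j}^n \to 0$. Since each $u_j$ is smooth, $\omega_{u_j}^n = f_j\,\omega^n$ for a smooth bounded density $f_j$, and for each fixed $j$ the convergence $\vphi_k \to \vphi$ in $L^1(\omega^n)$ together with the uniform bound $\|\vphi_k\|_\infty\leq C_2$ gives $\int (\vphi_k-\vphi)\,\omega_{u_j}^n \to 0$ as $k\to\infty$ via duality against the bounded density.

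The main obstacle is promoting this convergence to the \emph{joint} limit $\min\{j,k\} \to \infty$, since $\|f_j\|_{L^\infty}$ is not uniformly controlled in $j$. To overcome this I would argue that $\vphi_k \to \vphi$ in capacity: the upper direction already follows from the envelope argument above (the sets $\{\vphi_k > \vphi+\varepsilon\}$ are contained in $\{\tilde\vphi_j > \vphi+\varepsilon\}$ for $k\geq j$), and the lower direction $\mathrm{cap}_\omega(\{\vphi_k < \vphi-\varepsilon\}) \to 0$ can be extracted from the $L^1(\omega^n)$-convergence combined with the upper-regularization property (a Hartogs-type argument using uniform boundedness). Once $\vphi_k \to \vphi$ in capacity is established, the uniform volume-capacity estimate (a) lets me conclude
\[\notag
\int_X |\vphi_k - \vphi|\,\omega_{u_j}^n \leq \varepsilon\,\omega_{u_j}^n(X) + 2C_2\, \omega_{u_j}^n\bigl(\{|\vphi_k - \vphi| > \varepsilon\}\bigr),
\]
whose right hand side is uniformly small in $j$ once $k$ is large enough, completing the proof.
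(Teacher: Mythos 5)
Your first two steps are fine: the uniform volume--capacity estimate for $\omega_{u_j}^n$ (via Lemma~\ref{lem:DNS} and the uniform $C^{0,\alpha'}$ bound on $u_j$) is legitimate, and the envelope argument with $\tilde\vphi_j=(\sup_{l\ge j}\vphi_l)^*$ correctly controls the positive part $(\vphi_k-\vphi)_+$ (the restriction to $k\ge j$ is harmless: use $\tilde\vphi_{\min\{j,k\}}$). The genuine gap is the last step, where you assert that the lower direction $cap_\omega(\{\vphi_k<\vphi-\vepsilon\})\to 0$ ``can be extracted from the $L^1(\omega^n)$-convergence combined with the upper-regularization property (a Hartogs-type argument).'' Hartogs-type arguments only give one-sided (upper) control, which you have already used; they say nothing about downward spikes of $\vphi_k$. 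In fact $L^1$-convergence of a uniformly bounded sequence of ($\omega$-)psh functions to a ($\omega$-)psh limit does \emph{not} imply convergence in capacity: in $\bC$, $v_k(z)=\max\{\tfrac1k\log|z^k-1|,-1\}$ is uniformly bounded on $\{|z|\le 2\}$, converges in $L^1_{loc}$ to $\log^+|z|$, yet $\{v_k\le -1/2\}=\{|z^k-1|\le e^{-k/2}\}$ has logarithmic capacity $e^{-1/2}$ (capacity of a monic polynomial preimage), hence relative capacity bounded away from $0$, so $v_k\not\to\log^+|z|$ in capacity. Since your final display needs exactly $\omega_{u_j}^n(\{|\vphi_k-\vphi|>\vepsilon\})$ small uniformly in $j$, and you nowhere use that the $\vphi_k$ solve Monge--Amp\`ere equations, this step cannot be repaired by soft potential theory alone; capacity convergence of $\vphi_k$ is essentially as strong as the uniform convergence the paper only obtains afterwards.

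For comparison, the paper sidesteps this issue: after the easy Hartogs bound $\vphi_k\le\vphi+a$, it reduces to the \emph{signed} integral $\int_X(\vphi-\vphi_k)\,\omega_{u}^n$ against the \emph{fixed} measure $\omega_u^n$, which tends to $0$ by Cegrell's Vitali-type convergence theorem --- an argument exploiting that $\omega_u^n$ is the Monge--Amp\`ere measure of a bounded psh function (via integration by parts), not merely that it is dominated by capacity. It then transfers from $\omega_u^n$ to $\omega_{u_j}^n$ by writing the difference as $\int(\vphi-\vphi_k)\,dd^c(u_j-u)\wedge\sum_p\omega_{u_j}^p\wedge\omega_u^{n-1-p}$ and integrating by parts, using $\|u_j-u\|_\infty\to0$ together with Chern--Levine--Nirenberg and Cauchy--Schwarz bounds. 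If you want to keep your scheme, you must replace the ``Hartogs-type'' claim by an argument of this kind (or otherwise genuinely use the equations $\omega_{\vphi_k}^n=c_k\mu_k$); as written, the key lower estimate is unsupported.
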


\begin{proof}
Using the uniform boundedness of $\|\vphi_j\|_\infty$, $\|u_j\|_\infty$ and the argument in Cegrell\cite[Lemma~5.2]{Ce98} (it's a version of Vitali's convergence theorem) we get that
\[ \label{vitali-thm}
\int_X |\vphi_k - \vphi| \omega_{u}^n \to 0 \quad \mbox{ as }  k \to \infty.
\]
Indeed, we first have $\int_X (\vphi_k - \vphi) \omega_{u}^n \to 0$ as $k \to \infty$. Moreover, all functions are negative we get the result.

We shall prove the lemma by the contradiction argument. Assume that there exist subsequences, still denoted by $\{\vphi_k\}_{k\geq 1}^\infty$, $\{u_j\}_{j\geq 1}^\infty$, and $\delta>0$ such that
\[\notag
	\int_X |\vphi_k - \vphi| \omega_{u_j}^n > \delta.
\]
Let $a>0$ be small. By Hartogs'  lemma there exists $k_0$ such that
\[\notag
	\vphi_k \leq \vphi +a \quad \forall k \geq k_0.
\]
If we choose $a$ small enough, then for $k\geq k_0$ and $j\geq 1$,
\[\label{eq:lower-bound}
	\int_X (\vphi - \vphi_k) \omega_{u_j}^n \geq \delta/2.
\]
Next, we are going to show that
\[\label{eq:difference}
	E_{jk}:= \int_X (\vphi - \vphi_k) \omega_{u_j}^n - \int_{X} (\vphi - \vphi_k) \omega_u^n \to 0
\]
as $\min\{j,k \}\to +\infty.$ Indeed, 
\[\notag
	E_{jk} = \int_X (\vphi - \vphi_k) dd^c (u_j - u) \wed \sum_{p=0}^{n-1} \omega_{u_j}^p \wed \omega_u^{n-1-p}.
\]
Let us denote by $T_p (j)$ the current $ \omega_{u_j}^p \wed \omega_u^{n-p-1}$. Then
\[\notag
\begin{aligned}
	dd^c \left[(\vphi - \vphi_k) T_p (j)\right] 
&	= dd^c (\vphi -\vphi_k) \wed T_p (j)+ d(\vphi-\vphi_k) \wed d^c T_p (j)\\
&\quad - d^c (\vphi-\vphi_k) \wed dT_p (j) + (\vphi-\vphi_k) dd^c T_p (j)\\
&\quad =: S_1 + S_2 + S_3 + S_4.
\end{aligned}\]
By integration by parts 
\[\begin{aligned}
	E_{jk} 
&= 		\int_X (u_j - u) dd^c\left[ (\vphi - \vphi_k) T_p (j)\right] \\
&= 		\int_X (u_j -u) (S_1+S_2+S_3+S_4).
\end{aligned}\]
Now we shall estimate each term in the right hand side. First, since $S_1 = (\omega_\vphi - \omega_{\vphi_k}) \wed T_p (j)$,
\[
	\left|\int_X (u- u_j) S_1\right| \leq \|u - u_j\|_{\infty} \left(\int_X (\omega_\vphi + \omega_{\vphi_k}) \wed T_p (j)\right) \to 0
\]
as $j\to +\infty.$

Next, we estimate $\int_X (u-u_j) S_2$. As $d^c T_p (j)=  d^c \omega \wed T_p' (j) ,$ where $T_p' (j)$ is a sum of terms  of the form $C_3  \omega_{u_j}^{k}\wed \omega_u^{q}$ (the constant $C_3$ depending only on $n,p$), we apply the Cauchy-Schwarz inequality \cite[Proposition~1.4]{cuong16} to get that
\[\begin{aligned}
&\left| \int_X (u-u_j) d\vphi \wed S_2 \right| \\
&\leq C \|u-u_j\|_\infty \left[ \int_X d\vphi \wed d^c \vphi \wed \omega \wed T_p' (j) \right]^\frac{1}{2} \left[\int_X \omega^2\wed T_p' (j)\right]^\frac{1}{2}.
\end{aligned}\]
Moreover, 
\[\begin{aligned}
	2 \int_X d\vphi \wed d^c \vphi\wed T_p' (j)
&=	\int_X dd^c \vphi^2 \wed T_p' (j)- \int_X 2\vphi \omega_\vphi \wed T_p' (j) \\
&\quad+ 2 \int_X \omega\wed T_p' (j) \\
&\leq		C\left(\int_X \omega^n + \|\vphi\|_\infty^n \|u_j\|_\infty^n \|u\|_\infty^n\right),
\end{aligned}\]
where in the last inequality we used \cite[Proposition~1.5]{cuong16}.
Therefore, we conclude the right hand side of the previous inequality tends to $0$ as $j\to +\infty$. 
Similar estimates are also applied to the remaining terms with $S_3, S_4$. Thus we have shown that $E_{jk} \to 0$ as $\min\{j,k\} \to +\infty.$

Combining \eqref{vitali-thm}, \eqref{eq:lower-bound}, and \eqref{eq:difference} we get a contradiction. The lemma thus follows.
\end{proof}

{\em Existence of a continuous solution.} Notice that
\[\notag
	\int_X |\vphi_j - \vphi_k| \omega_{u_j}^n \leq \int_X |\vphi_j -\vphi| \omega_{u_j}^n + \int_X |\vphi_k -\vphi| \omega_{u_j}^n \to 0
\]
as $\min\{j,k \} \to +\infty$.  Therefore, using Lemma~\ref{lem:capacity-convergence} and the argument in \cite[Theorem~5.8]{KN15} we  get that
$\{\vphi_j\}_{j\geq 1}$ is a Cauchy sequence in $C^0(X)$. Thus, $$\vphi= \lim_j \vphi_j \quad \mbox{ in } C^0(X).$$ 
We conclude that  $\vphi \in PSH(\omega) \cap C^0(X)$ and it solves 
\[\label{eq:cont-solution}
	\omega_{\vphi}^n = c \; \mu,
\]
where $c$ is defined in \eqref{eq:limit-eq}.

{\em H\"older continuity of the solution.} We shall show that the solution $\vphi$ obtained in \eqref{eq:cont-solution} is H\"older continuous. 
Fix $\tau >0$ and set
$$
\alpha = \min \left\{\frac{1}{1+ (n+2)(n+\frac{1}{\tau }) } , \alpha _1\right\},
$$
where $\alpha _1$ is given in Lemma~\ref{lem:l1-bound}. By Corollary~\ref{cor:halpha} $\mu\in \cH(\tau)$ and then 
Proposition~\ref{prop:l1-stability} holds with $\gamma =\alpha .$

Consider the regularization of $\vphi$ as in \eqref{eq:phie}. 
As explained in \cite{kol08} and \cite{demailly-et-al14}   the result follows as soon as we show that
$$
\rho _t \vphi - \vphi \leq C t^{\alpha \alpha_1}
$$
for $t$ small enough.

It follows from Lemma~\ref{kis} that 
\[\notag
\begin{aligned}
	\vphi \leq \Phi_{\delta,b} 
&\leq \rho_\delta \vphi+ K(\delta+\delta^2). \\
&\leq \rho_\delta\vphi + 2K\delta.	
\end{aligned}\]
Choose the level $b = \left(\delta^\alpha -2 K\delta\right)/A = O(\delta^\alpha)$ so that \[Ab + 2K \delta = \delta^\alpha.\] 
After fixing the level $b$, we write 
\[\label{eq:modify} \Phi_\delta:= (1- \delta^\alpha)\Phi_{\delta,b}.\]
Then, by Lemma~\ref{kis}
\[
	\omega + dd^c \Phi_\delta \geq \delta^{2\alpha} \omega.
\]
Since $- C_4 \leq \vphi \leq 0$ and $\rho_\delta \vphi \leq 0$ one obtains
\[\label{eq:diff-0}
	\Phi_\delta 
	\leq (1-\delta^\alpha) (\rho_\delta \vphi+ K\delta + K\delta^2) \leq 2 K\delta.
\] It follows that
\[\label{eq:diff-1}
	\Phi_\delta \leq C_4 \delta^\alpha
\]
for $\delta \leq \delta_0$ small. Therefore, by \eqref{eq:diff-0} and \eqref{eq:diff-1} we have
\[\label{eq:diff-2}
	\Phi_\delta - \vphi \leq C_4\delta^\alpha + (1-\delta^\alpha) (\rho_\delta \vphi + K\delta + K\delta^2 -\vphi).
\]
Next, the stability estimate Proposition~\ref{prop:l1-stability}  applied for $\Phi_\delta - C_4 \delta^\alpha$ and $\vphi$, and $\gamma = \alpha$ give us  that
\[ \notag
\begin{aligned}
	\sup_X(\Phi_\delta - \vphi)
&	\leq C_5 \| \max\{ \Phi_\delta - \vphi - C_4 \delta^\alpha,0\}\|_{L^1(d\mu)}^\alpha + C_4 \delta^\alpha\\
&	\leq C_5 \| \rho_\delta \vphi+ K\delta + K\delta^2 -\vphi\|_{L^1(d\mu)}^\alpha +  C_4 \delta^\alpha,
\end{aligned}
\]
where we used \eqref{eq:diff-2} for the second inequality.
Hence, using Lemma~\ref{lem:l1-bound}, we conclude that 
\[\label{eq:diff-3}
	\Phi_\delta - \vphi \leq C_6 \delta^{\alpha\alpha_1}.
\]

For a fixed point $z$,  the minimum in the definition of $\Phi_{\delta,b}(z)$  is realized for  some   $t_0 = t_0 (z)$. Then, \eqref{eq:modify} and \eqref{eq:diff-1} imply 
\[\notag
	(1- \delta^\alpha) (\rho_{t_0} \vphi + Kt_0 + Kt_0 ^2  - b \log \frac{t_0 }{\delta} - \vphi) \leq C_6 \delta^\alpha.
\]
Since $\rho_t \vphi + Kt^2 + Kt - \vphi \geq 0$, we have
\[\notag 
	b (1-\delta^\alpha) \log\frac{t_0}{\delta} \geq - C_6 \delta^\alpha.
\]
Combining this with $b \geq \delta^\alpha / (2A)$, one gets that
\[
	t_0(z) \geq \delta \kappa \quad \mbox{ for } \kappa = \exp \left(- \frac{2AC_6}{(1-\delta_0^\alpha)}\right),
\]
where $\delta_0$ is fixed, and $\kappa$  is a  uniform constant.

Now, we are ready to conclude the proof. Since $t_0 =t_0 (z)\geq \delta \kappa$ and $t \mapsto \rho_t \vphi + Kt^2$ is increasing, 
\[\begin{aligned} \notag
	\rho_{\kappa \delta} \vphi(z)  + K (\delta \kappa)^2 + K\delta \kappa - \vphi(z) 
&	\leq \rho_{t_0} \vphi (z) + Kt_0^2 + K t_0 -\vphi (z)  \\
&	= \Phi_{\delta,b} (z) - \vphi (z) \\
&	= \frac{\delta^\alpha}{1-\delta^\alpha} \Phi_\delta  + (\Phi_\delta -\vphi). 
\end{aligned}\]
Combining this, \eqref{eq:diff-1} and \eqref{eq:diff-3} we get that
$$
	\rho_{\kappa \delta} \vphi (z) - \vphi(z) \leq C_7 \delta^{\alpha\alpha_1}. 
$$
The desired estimate  follows by rescaling $\delta := \kappa \delta$ and increasing $C_7$.


\begin{thebibliography}{000}


\bibitem{BT82} E. Bedford and B. A. Taylor, 
{\it A new capacity for plurisubharmonic functions.} Acta Math. {\bf149} (1982), 1--40.



\bibitem{BD12}{R. Berman and J.-P. Demailly, \it Regularity of
plurisubharmonic upper envelopes in big cohomology classes. \rm
Proceedings of the Symposium ``Perspectives in Analysis, Geometry
and Topology'' in honor of Oleg Viro (Stockholm University, May 2008),
Eds. I.~Itenberg, B.~J\"oricke, M.~Passare, Progress in Math.\ {\bf 296},
Birkh\"auser/Springer, Boston (2012) 39--66.}

\bibitem{blocki-kolodziej07} Z. B\l ocki and S. Ko\l odziej, 
{\it On regularization of plurisubharmonic functions on manifolds. } Proc. Amer. Math. Soc. {\bf 135} (2007), no. 7, 2089--2093.

\bibitem{Ce98} U. Cegrell, {\it Pluricomplex energy,}
Acta Math. {\bf 180:2} (1998), 187-217.

\bibitem{Cha15a} M. Charabati, {\it H\"older regularity for solutions to complex Monge-Amp\`ere equations,} Ann. Polon. Math. {\bf 113} (2015) no.~2, 109--127.


\bibitem{De94}{J.-P. Demailly, \it Regularization of closed positive
currents of type $(1,1)$ by the flow of a Chern
connection. \rm  Aspects of Math. Vol. E26, Vieweg (1994), 105--126.}


\bibitem{demailly-et-al14} J.-P. Demailly, S. Dinew, V. Guedj, P. Hiep, S. Ko\l odziej and  A. Zeriahi, {\it H\"older continuous solutions to Monge-Amp\`ere equations.} J. Eur. Math. Soc. (JEMS) {\bf 16} (2014), no.~4, 619--647.


\bibitem{DK12} S. Dinew and S. Ko\l odziej, 
{\it  Pluripotential estimates on compact Hermitian manifolds.}   Adv. Lect. Math. (ALM), {\bf 21} (2012), International Press, Boston. 

\bibitem{DN16} T.-C. Dinh, V.-A. Nguyen, 
{\it Characterization of Monge-Amp\`ere measures with H\"older continuous potentials.} J. Funct. Anal. {\bf 266} (2014), no. 1, 67--84

\bibitem{DMN15} T.-C. Dinh, X. Ma, V.-A. Nguyen, {\it Equidistribution speed for Fekete points associated with an ample line bundle. } Ann. Sci. \'Ec. Norm. Sup\'er. (4) {\bf 50} (2017), no.~3, 545--578. 

\bibitem{DNS10} T.-C. Dinh, V.-A. Nguyen\ and\ N. Sibony, {\it Exponential estimates for plurisubharmonic functions and stochastic dynamics,} J. Differential Geom. {\bf 84} (2010), no.~3, 465--488. 

\bibitem{EGZ09} P. Eyssidieux, V. Guedj and A. Zeriahi, {\it Singular K\"ahler-Einstein matrics.} J. Amer. Math. Soc. {\bf 22} (2009),  607--639.

\bibitem{GKZ08} V. Guedj, S. Ko\l odziej, A. Zeriahi,
{\it H\"older continuous solutions to Monge-Amp\`ere equations,}
Bull. Lond. Math. Soc. {\bf 40} (2008), 1070-1080.


\bibitem{kol03} S. Ko\l odziej, 
{\it The Monge-Amp\`ere equation on compact K\"ahler manifolds.} Indiana Univ. Math. J. {\bf 52} (2003),  667--686.

\bibitem{kol05} S. Ko\l odziej, 
{\it The complex Monge-Amp\`ere equation and pluripotential theory.} Memoirs Amer. Math. Soc. {\bf 178} (2005), pp. 64.

\bibitem{kol08} S. Ko\l odziej, 
{\it H\"older continuity of solutions to the complex Monge-Amp\`ere equation with the right hand side in $L^p$. The case of compact K\"ahler manifolds.} Math. Ann. {\bf 342} (2008), 379--386.

\bibitem{KN15} S. Ko\l odziej and N.-C. Nguyen, {\it Weak solutions to the complex Monge-Amp\`ere equation on Hermitian manifolds.} Analysis, complex geometry, and mathematical physics: in honor of Duong H. Phong, Contemporary Mathematics, vol. 644 (American Mathematical Society,
Providence, RI, 2015), 141-158.

\bibitem{KN2} S. Ko\l odziej and N.-C. Nguyen, {\it Stability and regularity of solutions of the Monge-Amp\`ere equation on Hermitian manifold.}, arXiv:1501.05749.

\bibitem{KN3}S. Ko\l odziej\ and\ N. C. Nguyen, {\it Weak solutions of complex Hessian equations on compact Hermitian manifolds}, Compos. Math. {\bf 152} (2016), no.~11, 2221--2248.


\bibitem{cuong16} N.-C. Nguyen, {\it The complex Monge-Amp\`ere type equation on compact Hermitian manifolds and Applications.} 
Adv. Math. {\bf 286} (2016), 240-285.

\bibitem{cuong17} N.-C. Nguyen, {\it On the H\"older continuous subsolution problem for the complex Monge-Amp\`erre equation.} Preprint. arXiv: 1703.01549.

\bibitem{TW10b} V. Tosatti and B. Weinkove, 
{\it The complex Monge-Amp\`ere equation on compact Hermitian manifolds.} J. Amer. Math. Soc. {\bf 23} (2010),  1187--1195.
\end{thebibliography}
\end{document}